\newcommand{\e}{\varepsilon}
\newcommand{\et}{\text{\rm \'et}}
\newcommand{\id}{\mathrm{id}}
\newcommand{\la}{\langle}
\newcommand{\mc}{\mathcal}
\newcommand{\mf}{\mathfrak}
\newcommand{\msf}{\mathsf}
\newcommand{\ola}{\overleftarrow}
\newcommand{\pr}{\mathrm{pr}}
\newcommand{\ra}{\rangle}
\newcommand{\rig}{\mathrm{rig}}
\newcommand{\Spa}{\mathop{\mathrm{Spa}}\nolimits}
\newcommand{\Spec}{\mathop{\mathrm{Spec}}\nolimits}
\newcommand{\Spf}{\mathop{\mathrm{Spf}}\nolimits}
\newcommand{\til}{\widetilde}
\newcommand{\vani}{\overleftarrow\times}
\newcommand{\wh}{\widehat}
\newcommand{\A}{\mathbb A}
\newcommand{\B}{\mathbb B}
\newcommand{\D}{\mathbb D}
\newcommand{\Q}{\mathbb Q}
\newcommand{\Z}{\mathbb Z}
\newtheorem{thm}{Theorem}[section]
\newtheorem{cor}[thm]{Corollary}
\newtheorem{lem}[thm]{Lemma}
\newtheorem{prop}[thm]{Proposition}
\newtheorem{ques}[thm]{Question}
\theoremstyle{definition}
\newtheorem{const}[thm]{Construction}
\newtheorem{defn}[thm]{Definition}
\newtheorem{exa}[thm]{Example}
\newtheorem{rem}[thm]{Remark}
\newcommand{\be}{\begin{eqnarray*}}
\newcommand{\ee}{\end{eqnarray*}}
\newcommand{\ben}{\begin{eqnarray}}
\newcommand{\een}{\end{eqnarray}}
\newcommand{\bi}{\begin{itemize}}
\newcommand{\ei}{\end{itemize}}
\newcommand{\bn}{\begin{enumerate}}
\newcommand{\en}{\end{enumerate}}
\newcommand{\bx}{\begin{eqnarray*}\begin{xy}\xymatrix}
\newcommand{\ex}{\end{xy}\end{eqnarray*}}
\newcommand{\bxy}{\begin{xy}\xymatrix}
\newcommand{\exy}{\end{xy}}
\newcommand{\bq}{\begin{ques}}
\newcommand{\eq}{\end{ques}}
\title{\'Etale cohomology of algebraizable rigid analytic varieties via nearby cycles over general bases}
\date{}
\author{Hiroki Kato
\thanks{Universit\'e Paris-Saclay, Laboratoire de Math\'ematiques d'Orsay, F-91405, Orsay, France 
\newline\texttt{Email: hiroki.kato@u-psud.fr}}}
\begin{document}
\maketitle

\begin{abstract}
We prove a finiteness theorem and a comparison theorem in the theory of \'etale cohomology of rigid analytic varieties. 
By a result of Huber, for a quasi-compact separated morphism of rigid analytic varieties with target being of dimension $\le1$, the compactly supported higher direct image preserves quasi-constructibility. 
Though the analogous statement for morphisms with higher dimensional target fails in general, we prove that, in the algebraizable case, it holds after replacing the target with a modification. 
We deduce it from a known finiteness result in the theory of nearby cycles over general bases and a new comparison result, which gives an identification of the compactly supported higher direct image sheaves, up to modification of the target, in terms of nearby cycles over general bases. 
\end{abstract}

\section{Introduction}
We prove a new finiteness theorem and a new comparison theorem on \'etale cohomology of rigid analytic varieties, more specifically, on the compactly supported higher direct image sheaves for algebraizable morphisms of algebraizable rigid analytic varieties. 

\subsection{Finiteness and modification}
Let $k$ be a non-archimedean field and $k^\circ$ denote the subring of power pounded elements of $k$, i.e, the ring of integers. 
Let $\msf f\colon \msf X\to \msf Y$ be a quasi-compact separated morphism of adic spaces locally of finite type over $\Spa(k,k^\circ)$ and $\msf F$ be a quasi-constructible sheaf of $\Z/n\Z$-modules on $\msf X_\et$, with $n$ an integer invertible in $k^\circ$. 
A finiteness theorem of Huber (\cite[Theorem 2.1]{Hub98}) states that, if the characteristic of $k$ is zero and if $\dim\msf Y\le 1$, then the compactly supported higher direct image sheaves $R^i\msf f_!\msf F$ are again quasi-constructible. 
Here, roughly speaking, a quasi-constructible sheaf is a sheaf which \'etale locally admits a stratification by intersections of constructible locally closed subsets and Zariski locally closed subsets such that the restriction to each stratum is locally constant. 
(See \cite{Hub98b}, \cite{Hub07} for variants of this finiteness result.)

We note that the analogous statement for $\msf f$ with $\dim\msf Y\ge2$ fails as pointed out in \cite[Example 2.2]{Hub98}. 
Nonetheless, the author expects that there exists a modification $\pi\colon \msf Y'\to \msf Y$ such that the pullback $\pi^*R^i\msf f_!\msf F$ is quasi-constructible, and we prove this expectation in the case where $\msf f\colon \msf X\to \msf Y$ and $\msf F$ are algebraizable. 

To be precise, let us introduce some notations: 
For a scheme $X$ locally of finite type over $k^\circ$, let $X^\rig$ denote the Raynaud generic fiber of the formal completion of $X$ along the closed fiber, viewed as an adic space. 
For a morphism $f\colon X\to Y$ of schemes locally of finite type over $k^\circ$, let $f^\rig$ denote the associated morphism $X^\rig\to Y^\rig$. 
Finally, for a sheaf $\mc F$ on the generic fiber $X_{\eta,\et}$ of a scheme $X$ locally of finite type over $k^\circ$, let $\mc F^\rig$ denote the sheaf on $X^\rig_\et$ obtained by pulling back $\mc F$. 

Now we can state our finiteness result:
\begin{thm}[Theorem \ref{modfin}]\label{blfin}
Let $k$ be a non-archimedean field and $n$ be an integer invertible in $k^\circ$. 
Let $f\colon X\to Y$ be a morphism of schemes of finite type over $k^\circ$ which is separated and of finite presentation. 
Let $\mc F$ be a constructible sheaf of $\Z/n\Z$-modules on the generic fiber $X_{\eta,\et}$. 
Then there exists a modification $\pi\colon Y'\to Y$ such that the sheaves $\pi^{\rig*}R^if^\rig_!\mc F^\rig$ are quasi-constructible. 
\end{thm}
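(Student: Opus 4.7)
The plan is to deduce the theorem from the two ingredients indicated in the abstract: a comparison result that, after pullback along a suitable modification of $Y$, expresses the rigid analytic sheaves $R^if^\rig_!\mc F^\rig$ in terms of nearby cycles over general bases attached to $f\colon X\to Y$, together with Orgogozo's finiteness theorem for nearby cycles over general bases, which supplies the needed modification.

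Concretely, I would first form the nearby cycles complex $R\Psi_f\mc F$ attached to the morphism $f\colon X\to Y$ of finite type $k^\circ$-schemes and the constructible sheaf $\mc F$ on the generic fiber $X_\eta$; this complex lives on the oriented product topos $X\vani_YY$. By Orgogozo's theorem, in a version with modifications (obtained from the alteration version by a flattening / de Jong--Temkin-type argument), one finds a modification $\pi\colon Y'\to Y$ such that, after base change to $Y'$ (and replacing $X$ by its strict transform $X'$), the formation of $R\Psi_{f'}\mc F'$ commutes with arbitrary further base change and its cohomology sheaves on $X'\vani_{Y'}Y'$ are constructible relative to a stratification of $Y'$ that is constructible as a scheme.

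The second step is the new comparison result promised by the abstract: one identifies $\pi^{\rig*}R^if^\rig_!\mc F^\rig$ on $Y'^\rig$ with a sheaf built from the compactly supported pushforward along the special fiber $X'_s\to Y'_s$ applied to $R\Psi_{f'}\mc F'$. At a geometric point $\bar y$ of $Y'^\rig$ this is essentially the classical Berkovich--Huber identification between the \'etale cohomology of a rigid fiber and the cohomology of the corresponding special fiber with coefficients in nearby cycles. Combined with the conclusion of Orgogozo's theorem, this yields a stratification of $Y'^\rig$ in which the Zariski-locally-closed pieces come from the algebraic stratification of $Y'_s$ and the constructible pieces come from a stratification of $Y'_\eta$ trivializing the algebraic nearby cycles, whence quasi-constructibility.

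The principal obstacle I foresee is proving the comparison result itself in the required global form. At a single classical stalk it is essentially Berkovich--Huber, but one needs a version functorial over the base $Y$ and compatible with specialization among the non-classical geometric points of $Y^\rig$. This requires matching the fibers of $f^\rig$ over generic points of the rigid space with appropriate henselian / formal-completion fibers of $f$, and checking that the nearby cycles identification respects these structures uniformly in families. Once this globalized comparison is in place, both the passage from Orgogozo's alterations to modifications and the translation of algebraic constructibility into rigid-analytic quasi-constructibility should be comparatively formal.
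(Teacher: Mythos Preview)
Your outline is essentially the paper's own strategy: apply Orgogozo to obtain a modification after which $R\Psi_f\mc F$ commutes with base change and is constructible on the vanishing topos, then invoke the comparison theorem (the paper's Theorem~\ref{construction}) to identify $Rf^\rig_!\mc F^\rig$ with the pullback of the constructible complex $R(f\vani\id)_!R\Psi_f\mc F$, and finally read off quasi-constructibility. Two small corrections will bring your sketch in line with the actual argument.

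First, Orgogozo's theorem already produces a modification of $Y$ directly, so no flattening or alteration-to-modification step is needed; and one takes the ordinary base change $X'=X\times_YY'$, not a strict transform.

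Second, and more substantively, you have the two stratifications swapped. The constructible complex $R(f\vani\id)_!R\Psi_f\mc F$ lives on $Y_s\vani_YY$, and the bridge to $Y^\rig$ is the canonical morphism $\til\lambda_Y\colon Y^\rig\to Y_s\vani_YY$ built from the specialization map $\lambda_Y\colon Y^\rig\to Y_s$ and the analytification map $\varphi_Y\colon Y^\rig\to Y_\eta$. A stratification of $Y_s$ pulls back along $\lambda_Y$ to give the \emph{constructible} (tube-type) pieces of $Y^\rig$, while a stratification of $Y_\eta$ pulls back along $\varphi_Y$ to give the \emph{Zariski} locally closed pieces. This is the content of the paper's Lemma~\ref{pullback by lambda}, and it is exactly the mechanism that converts constructibility on the vanishing topos into quasi-constructibility on the rigid space. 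Your identification of the main obstacle---globalizing the pointwise Huber comparison---is accurate, and the paper's answer is precisely the construction of $\til\lambda_Y$ together with the base-change morphism $\theta_f$ of Construction~\ref{rel comparison}.
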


Our proof is totally different from Huber's and reproves his finiteness result in the algebraizable case. 
Contrary to Huber's case, we do not need any assumption on the characteristic of $k$. 
In fact, we will not make any reference to the base field in the proof and will work with much more general setting (see \S\ref{sec blfin} for details). 

Further, our approach gives a remarkable simplification of the proof. 
We do not need any of complicated reduction arguments that Huber made in \cite{Hub98}. 
We do not rely on deep results used in his proof, namely, the preservation of constructibility by smooth compactly supported pushforward \cite[Theorem 6.2.2]{Hub96}, and the $p$-adic Riemann's existence theorem due to L\"utkebohmert \cite[Theorem 2.2]{Lut93}, which requires the assumption on the characteristic of $k$. 
We prove Theorem \ref{blfin} as a direct consequence of the following two results: 
One is a known finiteness result in the theory of nearby cycles over general bases due to Orgogozo \cite{Org06}, and the other is a new comparison result, which gives an identification of the compactly supported higher direct image sheaves, up to modification of the target, in terms of nearby cycles over general bases. 
This comparison result is the heart of this paper and will be briefly explained in the next subsection. 

The idea of our approach comes from a recent work of K.\ Ito \cite{Ito20}, in which the theory of nearby cycles over general bases is used to study \'etale cohomology of rigid analytic varieties. 
Though his aim is different from proving a finiteness result, his work includes a new proof of a special case of Huber's finiteness result; namely the special case of Theorem \ref{blfin} where $\dim Y^\rig=1$ and $\mc F$ is constant (\cite[Theorem 6.10]{Ito20}). 
Let us mention that his proof does not rely on the $p$-adic Riemann existence theorem, but still relies on the preservation of constructibility by smooth compactly supported pushforward. 

It is natural to ask if one can also apply our argument to higher direct image sheaves without support. 
The author expects that a similar statement with ``quasi-constructible'' replaced by ``oc-quasi-constructible'' (see \cite{Hub98b} for the definition and the case where $\dim \msf Y=1$) holds. 
However, it seems much more difficult and he does not know how to address it even in the algebraizable case (see Remark \ref{not qc}).  

One of typical applications of a finiteness result of higher direct image is the existence of a tubular neighborhood that does not change the cohomology. 
For instance, in \cite{Hub98}, the following is proved as a consequence of his finiteness result: 
Under the assumption that $k$ is of characteristic zero, every hypersurface in an affinoid adic space of finite type over $\Spa(k,k^\circ)$ admits a tubular (closed) neighborhood that does not change the compactly supported cohomology (see \cite{Hub98}, \cite{Hub98b}, \cite{Ito20} for variants). 
In a celebrated paper of Scholze \cite{Sch12}, such a existence result is used in the proof of the weight monodromy conjecture for varieties of complete intersection. 

Similarly, as an application of Theorem \ref{blfin}, we give a variant for families of rigid analytic varieties. 
For instance, we prove the following. 
We consider an algebraizable family of hypersurfaces parametrized by the unit disk, i.e, $\msf X\subset \msf \Spa(A,A^\circ)\to \B^1_k=\Spa(k\la T\ra,k^\circ\la T\ra)$ with $A$ topologically of finite type over $k\la T\ra$ and $\msf X$ a hypersurface defined by some $f\in A$.  
We show that there exist a small neighborhood $\B^1_k(\delta)=\{x\in\B^1_k\mid |T(x)|\le\delta\}$ of the origin of $\B^1_k$ with $\delta\in|k^\times|$ and a tubular (closed) neighborhood $\msf X$ defined over the punctured disk $\B^1_k(\delta)^*=\{x\in\B^1_k\mid 0<|T(x)|\le\delta\}$ which has the same compactly supported higher direct image sheaves on $\B^1_k(\delta)^*$.  
For the precise statement, see Proposition \ref{tube}.
This result might lead us to a relative version of Scholze's approach to the weight monodromy conjecture for varieties of complete intersection. 

\subsection{Comparison result}\label{intro comparison}
One of the most fundamental results in the theory of \'etale cohomology of rigid analytic varieties is the comparison theorem between the \'etale cohomology of an algebraizable rigid analytic variety and the nearby cycle cohomology, due to Fujiwara and Huber (\cite[Corollary 6.6.2]{Fuj95}, \cite[Theorem 5.7.6]{Hub96}). 
More generally, the results in Huber's book \cite{Hub96} show the following description of the stalks of the compactly supported higher direct image sheaves: 
Let $k$ be a non-archimedean field. Let $f\colon X\to Y$ be a separated morphism of schemes of finite type over $k^\circ$ and $\mc F$ a sheaf of $\Z/n\Z$-modules on the generic fiber $X_{\eta,\et}$, for an integer $n$.  
We consider the higher direct image $Rf^\rig_!\mc F^\rig$ by the morphism $f^\rig\colon X^\rig\to Y^\rig$ between the Raynaud generic fibers. 
For every geometric point $\xi$ (possibly of higher rank) of $Y^\rig$, the stalk $(Rf^\rig_!\mc F^\rig)_\xi$ is canonically isomorphic to the cohomology of the nearby cycle taken over the valuation ring $\kappa(\xi)^+$ corresponding to $\xi$. 

Thus, we know that there is a comparison isomorphism at each geometric point between the stalk of $Rf^\rig_!\mc F^\rig$ and a suitable nearby cycle cohomology. 
The main discovery of this article is a uniform construction of those separately constructed isomorphisms. 
More precisely, we construct a relative variant of nearby cycle cohomology as a complex on $Y^\rig_\et$ and a canonical morphism to $Rf^\rig_!\mc F$. 
Then we prove that the canonical morphism becomes an isomorphism after a modification of $Y$, under the assumption that $n$ is invertible in $k^\circ$ and that $\mc F$ is constructible. 

For the construction, we use the theory of nearby cycles over general bases as in \cite{Ill14}. 
We first construct a relative variant of nearby cycle cohomology as a complex $\Xi_f\mc F$ on the topos $Y\vani_YY$ called the vanishing topos of $Y$ (see \S\ref{c push} for details). 
Roughly speaking, the vanishing topos $Y\vani_YY$ is a topos whose points correspond to specializations $y\gets z$ of two geometric points of $Y$, and the complex $\Xi_f\mc F$ is a complex whose stalk at $y\gets z$ is canonically isomorphic to the cohomology of the nearby cycle taken with respect to the specialization $y\gets z$ over the strict henselization $Y_{(y)}$. 
Here, as mentioned in Illusie's survey article \cite[6.4]{Ill06}, the vanishing topos $Y\vani_YY$ receives a canonical morphism of topoi $\til\lambda_Y:(Y^\rig_\et)^\sim\to Y\vani_YY$ from the \'etale topos of $Y^\rig$ which sends a geometric point $\xi$ of $Y^\rig$ to the specialization defined by the closed point and the generic point of the spectrum of the valuation ring $\kappa(\xi)^+$. 
Then the desired complex on $Y^\rig_\et$ is defined to be the pullback $\til\lambda^*_Y\Xi_f\mc F$. 

\begin{thm}\label{construction}
Let $f\colon X\to Y$ be a compactifiable morphism of schemes of finite type over $k^\circ$ and $\mc F$ be a sheaf of $\Z/n\Z$-modules on the generic fiber $X_{\eta,\et}$, for an integer $n$. 
Then there exists a canonical morphism 
\be
\theta_f\colon \til\lambda_Y^*\Xi_f\mc F\to Rf^\rig_!\mc F^\rig
\ee
satisfying the following: 
Assume that $n$ is invertible in $k^\circ$, that $f$ is of finite presentation, and that $\mc F$ is constructible. 
Then, after replacing $Y$ by a modification and $f$ by the base change, the morphism $\theta_f$ becomes an isomorphism. 
\end{thm}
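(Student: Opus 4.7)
The plan is to construct $\theta_f$ by packaging the stalkwise comparison isomorphisms of Fujiwara--Huber into a single morphism of complexes on $Y^\rig_\et$, and then to upgrade this map to an isomorphism (after modification) by invoking Orgogozo's theorem on the constructibility of nearby cycles over general bases.

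For the construction of $\theta_f$, I would set up a commutative square of topoi with vertices $(X^\rig_\et)^\sim$, $X\vani_Y X$, $(Y^\rig_\et)^\sim$, $Y\vani_Y Y$, with horizontal arrows $\til\lambda_X$, $\til\lambda_Y$ sending a geometric point of the rigid generic fiber to the specialization $y\leftarrow z$ determined by the closed and generic points of its valuation ring, and with vertical arrows induced by $f^\rig$ and $f\vani f$. Combining the base-change morphism for compactly supported pushforward across this square with the identification relating the $\til\lambda_X$-pullback of the nearby cycle of $\mc F$ to $\mc F^\rig$ yields a canonical map $\theta_f\colon \til\lambda_Y^*\Xi_f\mc F\to Rf^\rig_!\mc F^\rig$.

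To prove that $\theta_f$ is an isomorphism after a suitable modification of $Y$ and the corresponding base change of $f$, I would reduce to checking stalks at every geometric point $\xi$ of $Y^\rig$, including those of higher rank. By the results collected in \cite[Chap.~5]{Hub96}, the stalk $(Rf^\rig_!\mc F^\rig)_\xi$ is the compactly supported cohomology of the geometric generic fiber of $X\times_Y\Spec\kappa(\xi)^+$, i.e., the nearby cycle cohomology for the specialization defined by $\kappa(\xi)^+$. On the other side, the construction of $\Xi_f$ together with the description of the points of the vanishing topos identifies $(\til\lambda_Y^*\Xi_f\mc F)_\xi$ with the nearby cycle cohomology for the same specialization, but computed inside the strict henselization $Y_{(y)}$ at the image point $y$ of $\xi$.

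The main obstacle is precisely the gap between these two nearby cycle computations: one is taken over the strict henselization $Y_{(y)}$, while the other is taken over the possibly much larger valuation ring $\kappa(\xi)^+$. This is where Orgogozo's theorem \cite{Org06} enters. Under the hypotheses $n$ invertible in $k^\circ$, $f$ of finite presentation, and $\mc F$ constructible, there is a modification $\pi\colon Y'\to Y$ after which the formation of the relative nearby cycles, and hence of $\Xi_f\mc F$, commutes with arbitrary base change. Applying this to the base change $\Spec\kappa(\xi)^+\to Y'_{(y)}$ lets one pass from the nearby cycle on $Y'_{(y)}$ to the one on $\Spec\kappa(\xi)^+$, thereby matching the two stalks under $\theta_f$ and concluding that $\theta_f$ becomes an isomorphism after this modification. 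Careful checking that the comparison produced by Huber's stalk formula is indeed the one induced by our $\theta_f$ via the diagram above is the one piece of bookkeeping I anticipate being subtle.
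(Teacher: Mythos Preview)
Your proposal is correct and follows essentially the same approach as the paper: $\theta_f$ is constructed exactly via the base-change square of topoi you describe (Construction~\ref{rel comparison}), and the isomorphism after modification is proved by reducing to stalks at geometric points of $Y^\rig$, then via a base-change compatibility (Lemma~\ref{comm w bc}) to the case where the base is $\Spec\kappa(\xi)^+$, where $\theta_f$ is identified with Huber's comparison isomorphism (Lemma~\ref{case of pt}), with Orgogozo's theorem supplying the modification. The bookkeeping you flag as subtle---that the stalkwise map produced by $\theta_f$ really is Huber's comparison---is precisely the content of Lemmas~\ref{comm w bc} and~\ref{case of pt}.
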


We note that the nearby cycle appearing in the description of $(Rf^\rig_!\mc F^\rig)_\xi$ is taken over the valuation ring $\kappa(\xi)^+$, while that appearing in $(\til\lambda_Y^*\Xi_f\mc F)_\xi$ is taken over the strict henselization. 
In general, the formation of nearby cycle does not commute with base change, and in particular, the above two nearby cycles are different. 
But, as proved by Orgogozo in \cite{Org06}, it commutes with base change after a modification, i.e, after replacing $Y$ by a modification and $f$ by the base change, the formation of the nearby cycle commutes with base change. In particular, the above two nearby cycles agree after this replacement. 

Once we have proved Theorem \ref{construction}, we can easily deduce Theorem \ref{blfin} from the constructibility of the complex $\Xi_f\mc F$ on the vanishing topos $Y\vani_YY$, which is a consequence of finiteness results on nearby cycles over general bases due to Orgogozo \cite{Org06}. 

The canonical morphism $\theta_f$ in Theorem \ref{construction} is constructed as a certain base change morphism with respect to a certain square diagram involving the vanishing topoi of $X$ and $Y$ (see Construction \ref{rel comparison}). 
To check that $\theta_f$ becomes isomorphic after modification, we show that, using results of Orgogozo in \cite{Org06}, after replacing $Y$ by a modification and $f$ by the base change, the formation of $\Xi_f\mc F$ commutes with base change. 
After this replacement, the morphism $(\theta_f)_\xi$ induced on the stalks at each geometric point $\xi$ of $Y^\rig$ is identified with the comparison morphism constructed by Huber.

\subsection{Organization of the article}
We recall basic definitions and some results on the theory of nearby cycles over general bases in \S\ref{sec vani}.
After a preliminary section \S\ref{sec adic} concerning on adic spaces, 
we give in \S\ref{sec rigid} the key constructions and prove Theorem \ref{construction}. 
We deduce Theorem \ref{blfin} from Theorem \ref{construction} in \S\ref{sec blfin}. 
Finally we deduce the existence of a good tubular neighborhood for families in \S\ref{sec tube}.

\paragraph{Acknowledgments}
The author would like to thank Teruhisa Koshikawa for pointing out mistakes in an earlier attempt of the author to directly generalize the proof in \cite{Hub98} and for suggesting to use nearby cycles over general bases. 
He is grateful to Kazuhiro Ito for explaining his work \cite{Ito20} to the author, especially how to apply nearby cycles over general bases to the theory of \'etale cohomology of rigid analytic varieties. 
He would also like to thank Daichi Takeuchi for various helpful discussions, especially for pointing out a mistake in Remark \ref{punc} of the earlier draft and suggesting to fix Definition \ref{adm} and Definition \ref{val triple}. 
Finally, he would like to express his gratitude to Takeshi Saito for various helpful discussion and for constant encouragement. 

This project has received funding from Iwanami Fujukai Foundation and the European Research Council (ERC) under the European Union's Horizon 2020 research and innovation programme (grant agreement No.\ 851146).

\paragraph{Convention}
For a scheme, or a pseudo-adic space $X$, we denote the associated \'etale site by $X_\et$. 
We use the same letter $X$ for the associated \'etale topos by abuse of notation. 

A morphism $f\colon X\to Y$ of schemes is called {\it compactifiable} if there exist a proper morphism $\bar f\colon \bar X\to Y$ and an open immersion $j\colon X\to \bar X$ such that $f=\bar fj$. 
Note that, by Nagata's compactification theorem (\cite[Chapter II, Theorem F.1.1]{FK18}), if $f$ is separated and of finite type and if $Y$ is quasi-compact and quasi-separated, then $f$ is compactifiable.  

\section{Vanishing topos and Nearby cycles over general bases}
\label{sec vani}
\subsection{Review of definition}
Let $f\colon X\to S$ and $g\colon Y\to S$ be morphisms of topoi. 
For the definition of the oriented product $X\vani_SY$, we refer to \cite{Ill14}. 
It comes with morphisms $p_1\colon X\vani_SY\to X$ and $p_2\colon X\vani_SY\to Y$ and a $2$-morphism $\tau\colon gp_2\to fp_1$, and is universal for these data. 
Recall that giving a $2$-morphism $a\to b$ between two morphisms $a,b\colon X\to Y$ of topoi is equivalent to giving a morphism $a_*\to b_*$ of functors (\cite[IV.3.2]{SGA4}).

If $f\colon X\to S$ and $g\colon Y\to S$ are morphisms of schemes, then a point of the topos $X\vani_SY$, i.e., a morphism from the punctual topos, is described by a triple $(x,y,c)$ consisting of a geometric point $x$ of $X$, a geometric point $y$ of $Y$, and a specialization $c\colon g(y)\to f(x)$ of geometric points, i.e, a morphism $g(y)\to S_{(f(x))}$ of schemes. 
We use the notation $x\gets y$ for the triple $(x,y,c)$. 

For a morphism $f\colon X\to S$ of schemes, the topos $X\vani_SS$ is called the vanishing topos and we have the natural morphism $\Psi_f\colon X\to X\vani_SS$ induced from $\id_X\colon X\to X$, $f\colon X\to S$, and the $2$-morphism $\id\colon f\to f$ by the universal property of $X\vani_SS$. 
For a ring $\Lambda$, the nearby cycle functor 
$R\Psi_f\colon D^+(X,\Lambda)\to D^+(X\vani_YY,\Lambda)$ 
is defined to be the derived functor $R(\Psi_f)_*$.

\begin{rem}\label{p2comp}
Let $X$ be a scheme and $\Lambda$ be a ring. 
We consider the $2$-commutative diagram
\bx{X\ar[r]^-{\Psi_\id}\ar[rd]_-\id
&X\vani_XX\ar[d]^-{p_2}
\\&X.}\ex
Recall that the morphism $p_2^*\to R\Psi_\id$ of functors $D^+(X,\Lambda)\to D^+(X\vani_XX,\Lambda)$ induced by $p_2\Psi_\id\cong\id$ is an isomorphism \cite[Proposition 4.7]{Ill14}. 

For a morphism $f\colon X\to S$ of schemes, we have the $2$-commutative diagram 
\bx{X\ar[r]^-{\Psi_\id}\ar[rd]_{\Psi_f}&X\vani_XX\ar[d]^{\id\vani f}\\&X\vani_SS.}\ex
Thus, we have canonical isomorphisms 
\be
R\Psi_f\cong R(\id\vani f)_*R\Psi_\id\cong R(\id\vani f)_*p_2^*.
\ee
\end{rem}

\subsection{Base change morphisms}\label{sec bc}
We recall the base change morphism for $R\Psi$. 
For a morphism $f\colon X\to Y$ of schemes, we consider base change by a morphism $Y^\prime\to Y$. Consider the Cartesian diagram
\bx{X'\ar[r]^{g}\ar[d]_{f'}&X\ar[d]^f\\Y'\ar[r]&Y}\ex
of schemes, which induces the diagram of topoi
\bx{X'\ar[r]^{g}\ar[d]_{\Psi_{f'}}&X\ar[d]^{\Psi_f}\\X'\vani_{Y'}Y'\ar[r]^{\ola g}&X\vani_YY.}\ex
We consider the base change morphism 
\be
{\ola g}^*R\Psi_{f}\to R\Psi_{f'}g^*. 
\ee

\begin{defn}
Let $f\colon X\to Y$ be a morphism of schemes, $\Lambda$ a ring,  and $\mc F$ an object of $D^+(X,\Lambda)$. 
We say that the formation of $R\Psi_f\mc F$ commutes with base change if, 
for every morphism $Y'\to Y$ of schemes, the base change morphism 
\be
{\ola g}^*R\Psi_{f}\mc F\to R\Psi_{f'}g^*\mc F
\ee
defined above is an isomorphism. 
\end{defn}

The following analogue of the proper base change theorem is indispensable in our construction. 
\begin{lem}\label{p1push}
Let 
\bx{
X'\ar[r]\ar[d]^{f'}&X\ar[d]^f\\Y'\ar[d]\ar[r]&Y\ar[d]\\S'\ar[r]&S
}\ex
be a commutative diagram of schemes with the upper square being Cartesian. 
We assume that $f$ is proper. 
We consider the diagram of topoi 
\bx{
X'\vani_{S'}S'\ar[d]^{f'\vani\id}\ar[r]^{\varphi'}&X\vani_SS\ar[d]^{f\vani\id}\\
Y'\vani_{S'}S'\ar[r]^{\varphi}&Y\vani_SS
}\ex
Let $\Lambda$ be a torsion ring.
Then the base change morphism 
\be
\varphi^*R(f\vani\id)_*\to R(f'\vani\id)_*{\varphi'}^*
\ee
is an isomorphism of functors $D^+(X\vani_SS,\Lambda)\to D^+(Y'\vani_{S'}S',\Lambda)$. 
\end{lem}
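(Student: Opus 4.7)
The plan is to identify the square in the lemma as a 2-Cartesian square in the 2-category of topoi, arising by base change from the proper morphism $f\colon X\to Y$, and then to invoke the topos-theoretic proper base change theorem.

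First, I would establish the equivalence
\[
X\vani_S S \;\simeq\; X\times_Y(Y\vani_S S)
\]
in the 2-category of topoi, where the 2-fibered product is taken along $f\colon X\to Y$ and the first projection $p_1\colon Y\vani_S S\to Y$. This follows from the universal property of the oriented product: both sides represent the same 2-functor, sending a topos $T$ to the groupoid of triples consisting of a morphism $a\colon T\to X$, a morphism $b\colon T\to S$, and a 2-morphism between the two compositions $T\to S$. The point is that, on the right-hand side, the datum of a morphism to $Y\vani_S S$ already provides $(T\to Y, T\to S, \tau)$, and the extra lift to $X$ with the compatibility $f\circ a = (T\to Y)$ recovers the data for $X\vani_S S$ (using that $X\to S$ factors through $Y$).

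Applying the same identification on the primed side and combining it with $X'\cong X\times_Y Y'$, I obtain $X'\vani_{S'}S'\simeq X\times_Y(Y'\vani_{S'}S')$. Consequently, the square displayed in the lemma is 2-Cartesian, and the vertical morphisms $f\vani\id$ and $f'\vani\id$ are each the base change of the proper morphism $f$ along the natural morphism of topoi to the étale topos of $Y$ (namely $p_1^Y\colon Y\vani_S S\to Y$, respectively its composition with $\varphi$).

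Since $f$ is proper and $\Lambda$ is a torsion ring, the proper base change theorem in its topos-theoretic form (as used throughout \cite{Ill14}) applies to this 2-Cartesian square and yields that
\[
\varphi^*R(f\vani\id)_*\;\longrightarrow\; R(f'\vani\id)_*{\varphi'}^*
\]
is an isomorphism on $D^+(X\vani_S S,\Lambda)$. The step that most needs care is invoking proper base change along a morphism of topoi that is not itself a morphism of schemes; once the 2-Cartesian identification is in hand, this is standard and is the expected mechanism by which proper base change enters the theory of oriented products. The 2-Cartesian claim itself is a formal consequence of the universal property of the oriented product.
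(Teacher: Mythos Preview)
Your approach is correct and genuinely different from the paper's. The paper argues pointwise: it fixes a point $y'\gets t'$ of $Y'\vani_{S'}S'$, reduces to strictly local bases, rewrites the stalks as $R\Gamma$ of oriented products, pushes down via $p_1$ to $X$ and $X'$, applies the ordinary proper base change theorem for schemes to $g\colon X'\to X$, and finishes by observing that the relevant morphisms of oriented products over strict localizations are local morphisms of local topoi. Your route is cleaner in structure: the identification $X\vani_S S\simeq X\times_Y(Y\vani_S S)$ (and its primed analogue) is indeed a formal consequence of the universal property, and it immediately exhibits the square as $2$-Cartesian with vertical arrows pulled back from the proper scheme morphism $f$.

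The one place where you should be more careful is the last step. There is no single theorem in \cite{Ill14} labeled ``topos-theoretic proper base change'' that you can simply cite for an arbitrary $2$-Cartesian square whose vertical arrow is the pullback of a proper morphism of schemes along a morphism of topoi. What makes this work is that the oriented products involved have enough points, and at each point the fiber of $f\vani\id$ is the geometric fiber of $f$; so one checks the base change isomorphism on stalks and reduces to the classical proper base change theorem for schemes. That verification is exactly the content of the paper's argument (phrased slightly differently because the paper does not first make your $2$-Cartesian identification). So your conceptual framing is an improvement, but the sentence ``this is standard'' hides precisely the computation the paper carries out; you should either spell out the pointwise reduction or cite a precise reference (e.g., the argument of \cite[Lemme~10.1]{Org06}) rather than gesturing at \cite{Ill14}.
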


Though this is essentially proved in \cite[Lemme 10.1]{Org06}, 
we include a proof for completeness. 

Recall that, if $X\to S$ and $Y\to S$ are coherent morphisms of coherent topoi, then the oriented product $X\vani_SY$ is also coherent \cite[Lemma 2.5]{Ill14}. 
Thus, by \cite[Proposition VI.9.0]{SGA4}, it has enough points, that is, a morphism $\mc F\to \mc G$ of objects of $X\vani_SY$ is an isomorphism if and only if the morphism $\mc F_{x\gets y}\to \mc G_{x\gets y}$ is bijective for any point $x\gets y$ of $X\vani_SY$ \cite[D\'efinition IV.6.4.1]{SGA4}. 
This implies that for any diagram $X\to S\gets Y$ of schemes, the oriented product $X\vani_SY$ has enough points. 

\begin{proof}[Proof of Lemma \ref{p1push}]
For each object $\mc F$ of $D^+(X\vani_SS,\Lambda)$ and for each point $y'\gets t'$ of $Y'\vani_{S'}S'$, we check that the morphism induced on the stalks is an isomorphism. 
For this, we may assume that $Y'=Y'_{(y')}$ and $Y=Y_{(y)}$, where $y$ is the image of $y'$. 
Further, by \cite[1.12]{Ill14}, we may assume that $S'=S'_{(s')}$ and $S=S_{(s)}$ are strictly local, where $s'$ and $s$ are the images of $y'$ in $S'$ and $S$ respectively. 
We denote the image of $t'$ in $S$ by $t$. 
Then the morphism $(R(f\vani\id)_*\mc F)_{y\gets t}\to (R(f'\vani\id)_*{\varphi'}^*\mc F)_{y'\gets t'}$ induced on the stalks is identified with the morphism
\be
R\Gamma(X\vani_SS_{(t)},\mc F)\to R\Gamma(X'\vani_{S'}S'_{(t')},\mc F)
\ee
induced by the adjunction morphism $\id\to R\varphi'_*{\varphi'}^*$. 
We consider the diagram
\bx{
X'\vani_{S'}S'_{(t')}\ar[r]^{\varphi'}\ar[d]^{p'_1}
&X\vani_SS_{(t)}\ar[d]^{p_1}
\\X'\ar[r]^{g}
&X.
}\ex
The morphism in question is identified with
\be
R\Gamma(X,Rp_{1*}\mc F)\to R\Gamma(X',Rp'_{1*}{\varphi'}^*\mc F).
\ee
Then, by the (usual) proper base change theorem, it suffices to show that the base change morphism $g^*Rp_{1*}\mc F\to Rp'_{1*}{\varphi'}^*\mc F$ is an isomorphism. 
We check that, for each geometric point $x'$ of $X'$, the morphism $(Rp_{1*}\mc F)_x\to (Rp'_{1*}{\varphi'}^*\mc F)_{x'}$ induced on the stalks is an isomorphism, where $x$ is the image of $x'$. 
But it is identified with the natural morphism 
$R\Gamma(X_{(x)}\vani_SS_{(t)},\mc F)\to R\Gamma(X'_{(x')}\vani_{S'}S'_{(t')},{\varphi'}^*\mc F)$, which is an isomorphism since $\varphi'\colon X'_{(x')}\vani_{S'}S'_{(t')}\to X_{(x)}\vani_SS_{(t)}$ is a local morphism of local topoi \cite[Corollaire 2.3.2]{Ill14}. 
Thus, the assertion follows. 
\end{proof}

\subsection{Higher direct image with compact support for vanishing topoi}
\label{c push}
Let $S$ be a scheme and $f\colon X\to Y$ be a morphism of $S$-schemes. 
We consider the natural morphism 
\be
f\vani \id\colon X\vani_SS\to Y\vani_SS.
\ee
We assume that $f\colon X\to Y$ is compactifiable and choose a proper morphism $\bar f\colon \bar X\to Y$ with an open immersion $j\colon X\to \bar X$ such that $\bar fj=f$.  
For a torsion ring $\Lambda$, we define a functor
\be
R(f\vani \id)_!\colon D^+(X\vani_SS,\Lambda)\to D^+(Y\vani_SS,\Lambda)
\ee
to be the composite $R(\bar f\vani\id)_*\circ (j\vani\id)_!$ (cf. \cite[Construction 1.8]{LZ19}). 
Note that $j\vani\id\colon X\vani_SS\to \bar X\vani_SS$ is an open immersion of topoi. 
Lemma \ref{p1push} (together with \cite[XVII.3.3]{SGA4}) implies 
that the composite $R(\bar f\vani\id)_*\circ (j\vani\id)_!$ is independent of the choice of a compactification up to a canonical isomorphism. 
Lemma \ref{p1push} also implies the following:
\begin{lem}\label{p1cpush}
Let notation be as in Lemma \ref{p1push}. 
We assume that $f$ is compactifiable. 
Then we have a canonical isomorphism 
\be
\varphi^*R(f\vani\id)_!\cong R(f'\vani\id)_!{\varphi'}^*
\ee
of functors $D^+(X\vani_SS,\Lambda)\to D^+(Y'\vani_{S'}S',\Lambda)$. 
\end{lem}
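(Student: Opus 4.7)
The plan is to reduce to Lemma \ref{p1push} by unwinding the definition of $R(f\vani\id)_!$ through a chosen compactification, then exploiting that for an open immersion of topoi extension by zero commutes with any pullback along a $2$-Cartesian square. First, I would choose a proper morphism $\bar f\colon \bar X\to Y$ and an open immersion $j\colon X\to \bar X$ with $f=\bar fj$. Setting $\bar X'=Y'\times_Y\bar X$, the base change $\bar f'\colon \bar X'\to Y'$ is proper, and since $X'=Y'\times_YX=\bar X'\times_{\bar X}X$, the base change of $j$ is an open immersion $j'\colon X'\to \bar X'$ with $f'=\bar f'j'$. Functoriality of the oriented product then gives a commutative diagram of topoi
\bx{
X'\vani_{S'}S'\ar[r]^{\varphi'}\ar[d]_{j'\vani\id}
&X\vani_SS\ar[d]^{j\vani\id}
\\
\bar X'\vani_{S'}S'\ar[r]^{\bar\varphi'}\ar[d]_{\bar f'\vani\id}
&\bar X\vani_SS\ar[d]^{\bar f\vani\id}
\\
Y'\vani_{S'}S'\ar[r]^{\varphi}
&Y\vani_SS
}\ex
in which $j\vani\id$ and $j'\vani\id$ are open immersions of topoi.

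Next, Lemma \ref{p1push} applied to the proper morphism $\bar f$ and the lower Cartesian square furnishes an isomorphism $\varphi^*R(\bar f\vani\id)_*\cong R(\bar f'\vani\id)_*\bar\varphi'^*$. For the upper square, I would verify that it is $2$-Cartesian by the pointwise criterion, using that these oriented products have enough points and that their points are explicitly described by pairs of geometric points together with a specialization; Cartesianness then reduces to the Cartesianness of $X'=\bar X'\times_{\bar X}X$. Granted this, the standard base change formula for extension by zero along an open immersion of topoi yields $\bar\varphi'^*(j\vani\id)_!\cong (j'\vani\id)_!\varphi'^*$.

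Chaining these two isomorphisms gives
\be
\varphi^*R(f\vani\id)_!=\varphi^*R(\bar f\vani\id)_*(j\vani\id)_!\cong R(\bar f'\vani\id)_*\bar\varphi'^*(j\vani\id)_!\cong R(\bar f'\vani\id)_*(j'\vani\id)_!\varphi'^*=R(f'\vani\id)_!\varphi'^*,
\ee
and independence of the choice of compactification, noted immediately before the statement, upgrades this to a canonical isomorphism. The main obstacle is the verification that the upper square of oriented products is $2$-Cartesian; while conceptually expected from the functoriality of the oriented product, this requires an explicit pointwise check that I expect to be routine but notationally delicate.
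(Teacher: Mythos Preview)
Your proposal is correct and matches the paper's intended argument: the paper states Lemma~\ref{p1cpush} as an immediate consequence of Lemma~\ref{p1push} (together with the standard base change for $(j\vani\id)_!$, cf.\ \cite[XVII.3.3]{SGA4}), and you have simply spelled out those details. Your only stated concern---the $2$-Cartesianness of the upper square---is in fact easy: the open subtopos $X\vani_SS\subset\bar X\vani_SS$ is defined by the open object $p_1^{-1}(X)$, and since $X'=\bar X'\times_{\bar X}X$ one has $\bar\varphi'^{-1}p_1^{-1}(X)=p_1'^{-1}(X')$, which is exactly the open object defining $X'\vani_{S'}S'\subset\bar X'\vani_{S'}S'$.
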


We consider a pair $(X,L_X)$ of a scheme $X$ and a constructible closed subset $L_X$ of $X$. 
We call such a pair a scheme with support. 
A morphism $f\colon (X,L_X)\to (Y,L_Y)$ of schemes with support is a morphism $f\colon X\to Y$ of schemes such that $L_X\subset f^{-1}(L_Y)$. 
We often regard $L_X$ as a subscheme of $X$ with the reduced scheme structure. 

Let $f\colon (X,L_X)\to (Y,L_Y)$ be a morphism of schemes with support. 
Let $\Lambda$ be a torsion ring and $\mc F$ be an object of $D^+(X,\Lambda)$. 
We are interested in the complex $R(f\vani\id)_!R\Psi_f\mc F$ on $L_Y\vani_YY$, where $f\vani\id$ denotes the natural morphism $L_X\vani_YY\to L_Y\vani_YY$, and where the restriction of $R\Psi_f\mc F$ is also denoted by $R\Psi_f\mc F$. 
This complex plays a role of a family of nearby cycle cohomology, which was denoted by $\Xi_f\mc F$ in the introduction. 

\begin{const}\label{bc xi}
Let $\mc F$ be an object of $D^+(X,\Lambda)$. 
We study a base change morphism for the complex $R(f\vani\id)_!R\Psi_f\mc F$. 
Let 
\bx{(X',L_{X'})\ar[r]^{g'}\ar[d]^{f'}
&(X,L_X)\ar[d]^f
\\(Y',L_{Y'})\ar[r]^{g}
&(Y,L_Y)}\ex
be a Cartesian diagram of schemes with support (being Cartesian means that the underlying diagram of schemes is Cartesian and ${f'}^{-1}(L_{Y'})\cap {g'}^{-1}(L_X)=L_{X'}$). 
Recall that we have the base change morphism 
\be
\ola{g'}^*R\Psi_f\mc F\to R\Psi_{f'}({g'}^*\mc F),
\ee
where $\ola{g'}$ is the natural morphism $L_{X'}\vani_{Y'}Y'\to L_{X}\vani_YY$ (see the beginning of \S\ref{sec bc}). 
Combining this with the base change isomorphism in Lemma \ref{p1cpush}, we obtain
\be
\alpha\colon \ola{g}^*R(f\vani\id)_!R\Psi_f\mc F&\cong &R(f'\vani\id)_!\ola{g'}^*R\Psi_{f}\mc F\\&\to &R(f'\vani\id)_!R\Psi_{f'}{g'}^*\mc F,
\ee
where $\ola g$ denotes the natural morphism $L_{Y'}\vani_{Y'}Y'\to L_Y\vani_YY$. 
\end{const}

It will be convenient to prepare the following notation. 

\begin{defn}\label{full c push}
For a morphism $f\colon (X,L_X)\to (Y,L_Y)$ of schemes with support with $f\colon X\to Y$ being compactifiable, we consider the natural morphism $\ola f\colon L_X\vani_XX\to L_Y\vani_YY$ of topoi. 
We define a functor 
\be
R\ola f_!\colon D^+(L_X\vani_XX,\Lambda)\to D^+(L_Y\vani_YY,\Lambda)
\ee 
by $R\ola f_!=R(f\vani\id)_!R(\id\vani f)_*$. 
\end{defn}

\begin{lem}\label{j and push}
Let $f\colon (X,L_X)\to (Y,L_Y)$  be a morphism of schemes with support such that $f\colon X\to Y$ is compactifiable. 
We take morphisms $(X,L_X)\overset{j}{\to}(\bar X,L_{\bar X})\overset{\bar f}{\to} (Y,L_Y)$ of schemes with support such that $\bar f\colon \bar X\to Y$ is a proper morphism, $j\colon X\to \bar X$ is an open immersion, and $\bar fj=f$. 
Then we have a canonical isomorphism 
\be
R\ola f_!\cong R\ola{\bar f}_*\ola j_!,
\ee
where $\ola{\bar f}$ denote the natural morphism $L_{\bar X}\vani_{\bar X}\bar X\to L_Y\vani_YY$ and $\ola j$ the natural open immersion $L_X\vani_XX=L_X\vani_{\bar X}\bar X\to L_{\bar X}\vani_{\bar X}\bar X$. 
\end{lem}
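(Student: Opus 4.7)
The plan is to unfold both sides using the definition of $R\ola{(\cdot)}_!$ from \S\ref{c push}, reducing to a base-change identity of functors, and then to verify the resulting morphism on stalks.

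First I would unfold the definitions. By Definition \ref{full c push} combined with the definition of $R(f\vani\id)_!$ via the compactification $(j,\bar f)$, one has
\be
R\ola f_!=R(\bar f\vani\id)_*(j\vani\id)_!R(\id\vani f)_*,
\ee
while on the other side, since $\bar f$ is proper,
\be
R\ola{\bar f}_*\ola j_!=R(\bar f\vani\id)_*R(\id\vani\bar f)_*\ola j_!.
\ee
Cancelling the outer $R(\bar f\vani\id)_*$, the lemma reduces to producing a canonical isomorphism
\be
(j\vani\id)_!R(\id\vani f)_*\cong R(\id\vani\bar f)_*\ola j_!.
\ee

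Next I would consider the commutative square of topoi
\bx{
L_X\vani_X X\ar[r]^-{\ola j}\ar[d]_-{\id\vani f}&L_{\bar X}\vani_{\bar X}\bar X\ar[d]^-{\id\vani\bar f}\\
L_X\vani_Y Y\ar[r]_-{j\vani\id}&L_{\bar X}\vani_Y Y,
}\ex
which commutes since $\bar fj=f$. A direct inspection at the level of points shows that this square is $2$-Cartesian: because $j\colon X\hookrightarrow\bar X$ is open, any specialization in $\bar X$ ending at a point of $L_X$ already lives in $X$, so a compatible pair of points of the lower-left and upper-right corners corresponds uniquely to a point of $L_X\vani_XX$. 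As the two horizontal morphisms are open immersions, flat base change supplies an isomorphism $(j\vani\id)^*R(\id\vani\bar f)_*\cong R(\id\vani f)_*\ola j^*$. Applied to $\ola j_!\mc G$ and combined with $\ola j^*\ola j_!\cong\id$, this gives $(j\vani\id)^*R(\id\vani\bar f)_*\ola j_!\mc G\cong R(\id\vani f)_*\mc G$, and then the adjunction $(j\vani\id)_!\dashv(j\vani\id)^*$ yields a canonical morphism
\be
\theta_{\mc G}\colon (j\vani\id)_!R(\id\vani f)_*\mc G\to R(\id\vani\bar f)_*\ola j_!\mc G.
\ee

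Finally I would check that $\theta_{\mc G}$ is an isomorphism on stalks; since the oriented products have enough points (as recalled after the statement of Lemma \ref{p1push}), this suffices. Over the open sub-topos $L_X\vani_YY$, both sides restrict to $R(\id\vani f)_*\mc G$ by the base change isomorphism above, so there is nothing to check. At a point $(\bar x,y,c)$ with $\bar x\in L_{\bar X}\setminus L_X$, the left-hand side vanishes by the definition of extension by zero. For the right-hand side I would argue as in the proof of Lemma \ref{p1push}: after reducing to the case where $L_{\bar X}$ is strictly local at $\bar x$ and $Y$ is strictly local at $y$, the stalk is computed as $R\Gamma$ of $\ola j_!\mc G$ restricted to the ``fiber'' $\bar X_{(\bar x)}\times_{Y_{(\bar s)}}Y_{(y)}$ (with $\bar s=\bar f(\bar x)$) embedded in $L_{\bar X}\vani_{\bar X}\bar X$ with first coordinate identically $\bar x$; by $2$-Cartesianness, $\ola j$ acts on first coordinates by the open inclusion $L_X\hookrightarrow L_{\bar X}$, so $\bar x\notin L_X$ forces $\ola j_!\mc G$ to vanish identically on this fiber, hence the stalk is zero.

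The main obstacle I anticipate is the stalk computation at points outside the open $L_X\vani_YY$: one has to correctly identify the fiber of $\id\vani\bar f$, which ultimately rests on the properness of $\bar f$ and the stalk formula used in the proof of Lemma \ref{p1push}, and then use the $2$-Cartesian structure of the square to see that the extension by zero $\ola j_!\mc G$ vanishes on that fiber.
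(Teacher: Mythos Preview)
Your overall strategy coincides with the paper's: both reduce to the identity
\be
(j\vani\id_Y)_!R(\id\vani f)_*\cong R(\id\vani\bar f)_*\ola j_!,
\ee
construct the comparison map by adjunction from the open base-change isomorphism on the square you wrote down, and then verify it separately over the open $L_X\vani_YY$ and over its closed complement. The construction of $\theta_{\mc G}$ is exactly what the paper does.

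The gap is in your treatment of the closed complement. Your claim that the stalk of $R(\id\vani\bar f)_*\ola j_!\mc G$ at $(\bar x\gets y)$ is computed on a fiber ``with first coordinate identically $\bar x$'' is not correct. After localizing, the relevant topos is $L_{\bar X,(\bar x)}\vani_{\bar X}(\bar X\times_YY_{(y)})$, and its points have first coordinate ranging over \emph{all} generizations of $\bar x$ in $L_{\bar X}$, not just $\bar x$ itself. When $\bar x\in Z=L_{\bar X}\setminus L_X$ lies in the closure of $L_X$, some of those generizations land in $L_X$, so $\ola j_!\mc G$ is not identically zero on that fiber and your vanishing argument breaks down. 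Relatedly, the appeal to Lemma \ref{p1push} is misplaced: that lemma computes stalks for morphisms of the form $f\vani\id$ (proper in the first variable), whereas here the morphism is $\id\vani\bar f$; properness of $\bar f$ plays no role in this step.

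The paper fills exactly this gap by citing \cite[Corollary 1.2]{LZ19}, which asserts that for the closed immersion $i\colon Z\hookrightarrow L_{\bar X}$ the base-change morphism
\be
(i\vani\id_Y)^*R(\id_{L_{\bar X}}\vani\bar f)_*\to R(\id_Z\vani\bar f)_*(i\vani\id_{\bar X})^*
\ee
is an isomorphism. Applying this to $\ola j_!\mc G$ and using $(i\vani\id_{\bar X})^*\ola j_!=0$ gives the required vanishing on $Z\vani_YY$. If you want to avoid the citation, you would need to reprove this base-change statement for closed immersions in the first variable of oriented products; it does not follow from the $2$-Cartesianness of the square alone.
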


\begin{proof}
Consider the diagram of topoi
\bx{
L_{X}\vani_XX\ar[r]^{j\vani \id_X}\ar[d]^{\id\vani f}
&L_{\bar X}\vani_{\bar X}\bar X\ar[d]^{\id \vani \bar f}
\\L_X\vani_YY\ar[r]^{j\vani \id_Y}
&L_{\bar X}\vani_YY,
}\ex
which induces a natural isomorphism 
\be
(j\vani\id_Y)^*R(\id\vani \bar f)_*\to R(\id \vani f)_*(j\vani\id_X)^*.
\ee 
Its inverse induces a morphism 
\be
R(\id\vani f)_*=R(\id\vani f)_*(j\vani\id_X)^*(j\vani\id_X)_!\to (j\vani\id_Y)^*R(\id \vani\bar f)_*(j\vani\id_X)_!.
\ee
By adjunction, we obtain
\ben\label{jj}
(j\vani\id_Y)_!R(\id\vani f)_*\to R(\id \vani\bar f)_*(j\vani\id_X)_!.
\een
Now let $i$ denote the closed immersion $Z=L_{\bar X}\setminus L_X\to L_{\bar X}$. 
Then the base change morphism 
\be
(i\vani\id_{Y})^*R(\id_{L_{\bar X}}\vani \bar f)_*\to R(\id_{{Z}}\vani \bar f)_*(i\vani\id_{\bar X})^*
\ee
is an isomorphism by \cite[Corollary 1.2]{LZ19}. 
Thus, the morphism (\ref{jj}) is an isomorphism, and hence, we obtain a canonical isomorphism $R\ola f_!\cong R\ola{\bar f}_*\ola j_!$. 
\end{proof}

Let $f\colon (X,L_X)\to (Y,L_Y)$ be a morphism of schemes with support. 
We consider the following morphisms of topoi:
\bx{
X&L_X\vani_XX\ar[l]_-{p_2}\ar[r]^{\id\vani f}&L_X\vani_YY\ar[r]^{f\vani\id}&L_Y\vani_YY.
}\ex
We note that, by Remark \ref{p2comp}, we have a canonical isomorphism $R\ola f_!p_2^*\cong R(f\vani\id)_!R\Psi_f$. 

The following will be used (only) in the proof of Lemma \ref{comm w bc}. 

\begin{const}\label{bc bc}
Again we consider a Cartesian diagram 
\bx{(X',L_{X'})\ar[r]^{g'}\ar[d]^{f'}
&(X,L_X)\ar[d]^f
\\(Y',L_{Y'})\ar[r]^{g}
&(Y,L_Y)}\ex
of schemes with support. 
We consider the diagram
\bx{
L_{X'}\vani_{X'}X'\ar[r]^{\ola g''}\ar[d]^{\id\vani f'}
&L_X\vani_XX\ar[d]^{\id\vani f}\ar[r]^-{p_2}
&X
\\L_{X'}\vani_{Y'}Y'\ar[r]^{\ola g'}\ar[d]^{f'\vani \id}
&L_X\vani_YY\ar[d]^{f\vani\id}
\\L_{Y'}\vani_{Y'}Y'\ar[r]^{\ola g}
&L_Y\vani_YY.}\ex
This induces a base change morphism
\be
\ola g^*R\ola f_!\to R\ola{f'}_!\ola{g''}^*
\ee
of functors $D^+(L_X\vani_XX,\Lambda)\to D^+(L_{Y'}\vani_{Y'}Y',\Lambda)$. 
For an object $\mc F$ of $D^+(X,\Lambda)$, we have the following commutative diagram
\bx{
\ola g^*R(f\vani\id)_!R\Psi_f\mc F\ar[r]^\alpha\ar[d]
&R(f'\vani\id)_!R\Psi_{f'}({g'}^*\mc F)\ar[d]
\\\ola g^*R\ola f_!p_2^*\mc F\ar[r]
&R\ola{f'}_!\ola{g''}^*p_2^*\mc F.
}\ex
in $D^+(L_{Y'}\vani_{Y'}Y',\Lambda)$.
\end{const}

\subsection{Constructibility}
For a scheme or a morphism of schemes, being coherent means being quasi-compact and quasi-separated. 

\begin{defn}[{\cite[\S\S8--9]{Org06}}]
Let $X\to S$ and $Y\to S$ be coherent morphisms of coherent schemes. 
Let $\Lambda$ be a noetherian ring. 
We say that a sheaf $\mc F$ of $\Lambda$-modules on $X\vani_SY$ is {\it constructible }if there exist partitions $X=\coprod_{i\in I}X_i$ and $Y=\coprod_{j\in J}Y_i$ by finitely many locally closed constructible subsets $X_i\subset X$ and $Y_j\subset Y$ such that the restriction of $\mc F$ to the subtopos $X_i\vani_SY_j$ is locally constant of finite type for every $(i,j)\in I\times J$. 
\end{defn}

The full subcategory of the category of sheaves of $\Lambda$-modules on $X\vani_SY$ consisting of constructible sheaves is a thick subcategory, i.e., closed under extension. 
Thus, complexes $\mc K$ such that the cohomology sheaves $\mc H^i(\mc K)$ are constructible for all $i$ form a triagulated subcategory of $D^b(X\vani_SY,\Lambda)$, 
which is denoted by $D^b_c(X\vani_SY,\Lambda)$.  

\begin{rem}\label{psi good implies c}
Let $f\colon X\to Y$ be a morphism of finite presentation of coherent schemes with $Y$ having only finitely many irreducible components. 
Let $\mc F$ be an object of $D^b_c(X,\Z/n\Z)$, for an integer $n$ invertible on $Y$. 
Then, by \cite[Th\'eor\`eme 8.1, Lemme 10.5]{Org06}, if the formation of $R\Psi_f\mc F$ commutes with base change, then $R\Psi_f\mc F$ is an object of $D_c^b(X\vani_YY,\Z/n\Z)$. 
\end{rem}

\begin{lem}\label{c of family of psi}
Let $f\colon (X,L_X)\to (Y,L_Y)$ be a morphism of schemes with support (\S\ref{c push}) such that $Y$ is a coherent scheme having only finitely many irreducible components and $f\colon X\to Y$ is a separated morphism of finite presentation. 
Let $n$ be an integer invertible on $Y$ and $\mc F$ an object of $D^b_c(X,\Z/n\Z)$. 
We assume that the formation of $R\Psi_f\mc F$ commutes with base change. 
Then the complex $R(f\vani\id)_!R\Psi_f\mc F$ is an object of $D^b_c(L_Y\vani_YY,\Z/n\Z)$, where $f\vani\id$ denotes the natural morphism $L_X\vani_YY\to L_Y\vani_YY$ and $R\Psi_f\mc F$ also denotes its restriction to $L_X\vani_YY$. 
\end{lem}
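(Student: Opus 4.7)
The plan is to deduce the constructibility of $R(f\vani\id)_!R\Psi_f\mc F$ on $L_Y\vani_YY$ in three successive steps: first establish constructibility of $R\Psi_f\mc F$ on the full vanishing topos $X\vani_YY$, then restrict to the support $L_X\vani_YY$, and finally apply the compactly supported pushforward $R(f\vani\id)_!$.

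For the first step, the hypotheses of Remark \ref{psi good implies c} hold verbatim ($Y$ coherent with finitely many irreducible components, $f$ of finite presentation, $n$ invertible on $Y$, $\mc F\in D^b_c$, and $R\Psi_f\mc F$ commuting with base change), so $R\Psi_f\mc F\in D^b_c(X\vani_YY,\Z/n\Z)$. For the second step, constructibility is preserved under pullback along the morphism of topoi $L_X\vani_YY\to X\vani_YY$ induced by the closed immersion $L_X\hookrightarrow X$: if $X=\coprod X_i$ and $Y=\coprod Y_j$ witness the constructibility of $R\Psi_f\mc F$, then $L_X=\coprod(L_X\cap X_i)$ together with $Y=\coprod Y_j$ witnesses the constructibility of its restriction.

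The remaining step is to show that $R(f\vani\id)_!$ sends $D^b_c(L_X\vani_YY,\Z/n\Z)$ to $D^b_c(L_Y\vani_YY,\Z/n\Z)$. Since $f\colon X\to Y$ is separated of finite presentation and $L_X$, $L_Y$ are closed, the induced morphism $L_X\to L_Y$ is separated and of finite type; as $L_Y$ is coherent, Nagata's theorem provides a factorization $L_X\xrightarrow{j}\bar L_X\xrightarrow{\bar f}L_Y$ with $j$ an open immersion and $\bar f$ proper, and by definition $R(f\vani\id)_!\cong R(\bar f\vani\id)_*\circ(j\vani\id)_!$. The extension by zero $(j\vani\id)_!$ visibly preserves constructibility (one simply adjoins the complementary stratum, on which the resulting sheaf is zero). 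The key remaining point is that the proper pushforward $R(\bar f\vani\id)_*$ preserves $D^b_c$; this is the vanishing-topos analogue of the classical preservation of constructibility by proper pushforward, and follows from the proper base change isomorphism of Lemma \ref{p1push} combined with Orgogozo's finiteness results in \cite{Org06}, through a stratification argument that reduces constructibility to classical proper base change at geometric points.

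The main technical obstacle will be this last point---preservation of constructibility under $R(\bar f\vani\id)_*$ for proper $\bar f$. Although Lemma \ref{p1push} makes stalks computable via ordinary proper pushforward, deducing a \emph{uniform} stratification of $L_Y\vani_YY$ on which $R(\bar f\vani\id)_*$ is locally constant requires simultaneously stratifying $\bar L_X\to L_Y$ and $L_Y\to Y$ and exploiting the constructibility machinery of \cite{Org06}; fortunately the necessary statements are already in place there.
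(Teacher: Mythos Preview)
Your proposal is correct and follows essentially the same approach as the paper: first invoke Remark~\ref{psi good implies c} for constructibility of $R\Psi_f\mc F$, then appeal to Orgogozo's results in \cite{Org06} to handle the compactly supported pushforward. The paper's proof is terser---it cites \cite[Proposition 10.2]{Org06} directly for the preservation of constructibility under $R(f\vani\id)_!$, rather than spelling out the Nagata/extension-by-zero/proper-pushforward decomposition as you do---and it adds a ``standard limit argument'' (presumably to descend from the coherent, finitely-presented setting to the noetherian hypotheses under which \cite{Org06} is stated), a technical point you do not mention but which is routine.
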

\begin{proof}
By Remark \ref{psi good implies c}, the complex $R\Psi_f\mc F$ is an object of $D^b_c(X\vani_YY,\Lambda)$. 
Then the assertion follows from \cite[Proposition 10.2]{Org06} together with a standard limit argument. 
\end{proof}

\section{Preliminaries on analytic adic spaces}
\label{sec adic}
We refer to \cite{Hub94}, \cite{Hub96} for details on adic spaces. 
\subsection{Analytic adic spaces}
We first recall from \cite[1.9]{Hub96} the analytic adic space associated to a formal scheme. 
We consider a formal scheme $\mf X$ satisfying the following condition from \cite[1.9]{Hub96}: 
\begin{description}
\item{(*)} Any point of $\mf X$ admits an affine neighborhood $\Spf A\subset \mf X$ such that the topology of $A$ is $\varpi$-adic for some element $\varpi\in A$ and the ring $A[1/\varpi]$ equipped with the topology induced from $A$ is a strongly noetherian Tate ring (see \cite[\S2]{Hub94} or \cite[1.1]{Hub96} for the definition of being strongly noetherian). 
\end{description}
Then we can consider the analytic adic space $d(\mf X)$ associated to $\mf X$ defined in \cite[Proposition 1.9.1]{Hub96}; if $\mf X=\Spf A$ is affine with the topology of $A$ being $\varpi$-adic for some $\varpi\in A$, then $d(\mf X)=\Spa(A[1/\varpi],A^+)$, where $A^+$ is the integral closure of $A$ in $A[1/\varpi]$. 
The adic space $d(\mf X)$ comes with a natural morphism $\lambda_{\mf X}\colon d(\mf X)\to \mf X$ of locally topologically ringed spaces. 
It induces a natural morphism $d(\mf X)_\et\to \mf X_\et$ of sites as in \cite[Lemma 3.5.1]{Hub96}, which we denote again by $\lambda_{\mf X}$ and call the specialization morphism. 
In this article we focus on formal schemes coming from schemes.  

\begin{defn}\label{adm}
Let $X$ be a scheme and $X_0$ the closed subset defined by vanishing of a locally finitely generated ideal sheaf. 
Consider the formal completion $\wh X$ of $X$ along a locally finitely generated ideal sheaf defining $X_0$ (note that $\wh X$ is independent of the choice of a locally finitely generated ideal sheaf). 
The pair $(X,X_0)$ is called an {\it admissible pair} if $\wh X$ satisfies the condition (*). 
A morphism $(X,X_0)\to (Y,Y_0)$ of admissible pairs is a morphism $f\colon X\to Y$ of schemes such that $X_0=f^{-1}(Y_0)$. 
\end{defn}

For an admissible pair $(X,X_0)$, let $\wh X$ denote the formal completion of $X$ along a locally finitely generated ideal sheaf defining $X_0$ and let $(X,X_0)^a$ be the associated analytic adic space $d(\wh X)$.  
The assignment $(X,X_0)\mapsto (X,X_0)^a$ defines a functor from the category of admissible pairs to that of analytic adic spaces. 
If there is no risk of confusion, we simply write $X^a$ for $(X,X_0)^a$. 

Let $U$ denote the complementary open $X\setminus X_0$. 
The adic space $X^a$ comes with the following two natural morphisms of sites;
\begin{itemize}
\item $\lambda_X\colon X^a_\et\to X_{0,\et}$ defined to be the composite of the specialization morphism $\lambda_{\wh X}\colon X^a_\et\to \wh X_\et$ and the natural equivalence $\wh X_\et\cong X_{0,\et}$,
\item $\varphi_X\colon X^a_\et\to U_\et$ defined in \cite[3.5.12]{Hub96}, which we call the analytification morphism.
\end{itemize}

\begin{exa}\label{rigid}
Let $k$ be a non-archimedean field and $X$ a scheme locally of finite type over $k^\circ$. 
Then we can form an admissible pair $(X,X_0)$ by letting $X_0$ be the closed fiber of $X$. 
Then the analytic adic space $(X,X_0)^a$ is nothing but the Raynaud generic fiber (\cite{Ray74}, \cite[Remark 4.6.ii]{Hub94}, \cite[Example 1.9.2.ii]{Hub96}) viewed as an adic space, which is denoted by $X^\rig$. 
Similarly, for a morphism $f\colon X\to Y$, we denote the associated morphism $X^\rig\to Y^\rig$ by $f^\rig$. 
For a sheaf $\mc F$ on $X_\et$, we denote the pullback $\varphi_X^*\mc F$ by $\mc F^\rig$. 
\end{exa}

\subsection{Analytic pseudo-adic spaces}
In the proof of Theorem \ref{construction}, we will need to consider geometric points of adic spaces, for which we need to work with the framework of pseudo-adic spaces. 
We refer to \cite[1.10]{Hub96} for details on pseudo-adic spaces. 
Let us just recall that a pseudo-adic space is a pair $X=(\underline X,|X|)$ of an adic space $\underline X$ and a subset $|X|$ of $\underline X$ satisfying certain conditions.
A morphism $f\colon (\underline X,|X|)\to (\underline Y,|Y|)$ of pseudo-adic spaces is a morphism $f\colon \underline X\to \underline Y$ of adic spaces such that $f(|X|)\subset|Y|$. 
We often regard an adic space $X$ as a pseudo-adic space via the fully faithful functor $X\mapsto (X,X)$ from the category of adic spaces to that of pseudo-adic spaces. 

As in \cite[3.5.3]{Hub96}, to a pair $(\mf X,L)$ of a formal scheme $\mf X$ satisfying (*) and a locally closed subset $L$ of $\mf X$, we can associate an analytic pseudo-adic space $(d(\mf X),\lambda_{\mf X}^{-1}(L))$, which is denoted by $d(\mf X,L)$. 
It comes with a natural morphism $\lambda_{(\mf X,L)}\colon d(\mf X,L)_\et\to L_\et$ of sites \cite[3.5.3, 3.5.5]{Hub96}, where $L$ is regarded as a subscheme of $\mf X$ with reduced scheme structure. 
We call $\lambda_{(\mf X,L)}$ the specialization morphism. 

We define an algebro-geometric datum which gives an analytic pseudo-adic space. 

\begin{defn}\label{triple}
An {\it admissible triple }is a triple $(X,X_0,L)$ with $(X,X_0)$ being an admissible pair (Definition \ref{adm}) and $L$ being a constructible closed subset of $X_0$. 
A morphism $(X,X_0,L)\to (Y,Y_0,M)$ of admissible triples is a morphism $f\colon (X,X_0)\to (Y,Y_0)$ of admissible pairs such that $f(L)\subset M$. 
\end{defn}
For an admissible triple $(X,X_0,L)$, we can consider the pseudo-adic space $d(\wh X,L)$ associated to the pair $(\wh X,L)$, which we denote by $(X,X_0,L)^a$, or simply by $X^a$. 
The assignment $(X,X_0,L)\mapsto (X,X_0,L)^a$ defines a functor from the category of admissible triples to that of analytic pseudo-adic spaces. 
We often regard $L$ as a closed subscheme with reduced scheme structure. 
By taking $L$ to be $X_0$, we recover the adic space associated to the admissible pair $(X,X_0)$, that is, we have $(X,X_0,X_0)^a=(X,X_0)^a$. 

Let $(X,X_0,L)$ be an admissible triples and $U$ denote the open subscheme $X\setminus X_0$. 
Similarly to the case of admissible pairs, the associated analytic pseudo-adic space $X^a$ comes with the two natural morphisms of sites;
\begin{itemize}
\item the specialization morphism $\lambda_X=\lambda_{(\wh X,L)}\colon X^a_\et\to L_\et$, 
\item $\varphi_X\colon X^a_\et\to U_\et$ defined to be the composite $X^a_\et=(X,X_0,L)^a_\et\to (X,X_0)^a_\et\to U_\et$.
\end{itemize}

Important examples of admissible triples are those coming from schemes of finite type over a microbial valuation ring. 
Recall that a valuation ring $B$ is called microbial if $B$ has a prime ideal of height one (\cite[Definition 1.1.4]{Hub96}). 
For instance, any finitely dimensional valuation ring is microbial. 
\begin{defn}[{cf.\ \cite[Example 1.9.2.i]{Hub96}}]
\label{val triple}
Let $B$ be a microbial valuation ring. 
The admissible triple $(S,S_0,L_S)$ associated to $B$ is defined as follows. 
Put $S=\Spec B$, let $S_0$ be the closed set defined by vanishing of the unique height one prime ideal $\mathfrak p$ of $B$, and $L_S$ the singleton consisting of the closed point of $S$. 
Note that, as a finitely generated ideal defining $S_0$, we can take the ideal generated by a nonzero element of $\mathfrak p$. 
\end{defn}

\begin{rem}\label{punc}
\bn
\item Let $a$ be a nonzero element of $\mathfrak p$. 
Note that the $a$-adic topology on $B$ coincides with the valuation topology, and hence the formal completion of $S$ along the ideal generated by $a$ is isomorphic to the formal spectrum $\Spf B$ with $B$ being equipped with the valuation topology. 

\item Let $K$ denote the field of fraction of $B$ and $S^a$ the pseudo-adic space associated to $(S,S_0,L_S)$. 
Then the analytification morphism $\varphi_S\colon S^a\to \Spec K$ gives an equivalence of topoi \cite[Proposition 2.3.10]{Hub96}. 
In particular, if $K$ is separably closed, then the topos $S^a$ is equivalent to the punctual topos.
\en
\end{rem}

\begin{exa}\label{geom pt}
Let $(Y,Y_0,L_Y)$ be an admissible triple and $Y^a$ denote the associated pseudo-adic space. 
Let $\xi=(\Spa(\kappa(\xi),\kappa(\xi)^+),\{*\})\to Y^a$ be a geometric point, i.e, $\kappa(\xi)^+$ is a microbial valuation ring with separably closed field of fractions $\kappa(\xi)$ and $*$ is the closed point of $\Spa(\kappa(\xi),\kappa(\xi)^+)$  (\cite[Definition 2.5.1]{Hub96}). 
Let $(S,S_0,L)$ denote the pseudo-adic space associated to the microbial valuation ring $\kappa(\xi)^+$. 
Then $\xi$ is identified with the pseudo-adic space $S^a$  associated to the admissible triple $(S,S_0,L)$.
Note that we have a natural morphism $(S,S_0,L_S)\to (Y,Y_0,L_S)$ of admissible triples. 
The associated morphism $S^a\to  Y^a$ is canonically identified with the geometric point $\xi\to Y^a$. 
\end{exa}

\subsection{Adic spaces and vanishing topoi}
\label{til lambda}
For an admissible triple $(X,X_0,L)$, we construct a canonical morphism $\til\lambda_X\colon X^a\to L\vani_XX$ of topoi (cf.\ \cite[6.4]{Ill06}). 
We denote the complement of $X_0$ by $U$. 
Then we have a natural diagram of topoi
\ben\label{2}\bxy{
X^{a}\ar[r]^{\varphi_X}\ar[d]_{\lambda_X}&U\ar[d]^{j}\\
L\ar[r]^i&X
}\exy\een
with a natural $2$-map $\tau\colon j\varphi_X\to i\lambda_X$ defined as follows (cf.\ the proof of \cite[Theorem 3.5.13]{Hub96}): 
For an object $V$ of $X_\et$, we have a natural morphism
\be
\lambda_X^{-1}i^{-1}V=V^a\to V\times_{X}X^a=\varphi_X^{-1}j^{-1}V
\ee
from \cite[1.9.5]{Hub96}, which induces, for a sheaf $\mc F$ on $(X^a)_\et$, a morphism 
\be
(j_*\varphi_{X*}\mc F)(V)=\mc F(\varphi_X^{-1}j^{-1}V)\to \mc F(\lambda_X^{-1}i^{-1}V)=(i_*\lambda_{X*}\mc F)(V).
\ee
Thus, by the universal property of the vanishing topos $L\vani_XU$, the above diagram induces a canonical morphism of topoi
\be
\widetilde\lambda_{X}\colon X^{a}\to L\vani_XU.
\ee
We denote the composite $X^a\to L\vani_XU\to L\vani_XX$ also by $\til\lambda_X$ when there is no risk of confusion.

\section{Compactly supported direct image for rigid analytic varieties}
\label{sec rigid}
Our main interest in this section is the compactly supported direct image $Rf^a_!$ for a morphism $f\colon (X,X_0,L_X)\to (Y,Y_0,L_Y)$ of admissible triples (Definition \ref{triple}) with $f\colon X\to Y$ compactifiable, where $f^a$ denotes the induced morphism $X^a\to Y^a$ of analytic pseudo-adic spaces. 

Recall that a compactification of $f\colon X\to Y$ induces a compactification of $f^a\colon X^a\to Y^a$. 
More precisely, let $\bar f\colon \bar X\to Y$ be a proper morphism with an open immersion $j\colon X\to \bar X$ such that $f=\bar fj$. 
Let $\bar X_0$ be the pullback $\bar f^{-1}(Y_0)$ and $L_{\bar X}$ be the closure of $L_X$ in $\bar X_0$, so that we have morphisms $(X,X_0,L_X)\overset{j}{\to}(\bar X,\bar X_0,L_{\bar X})\overset{\bar f}{\to}(Y,Y_0,L_Y)$ of admissible triples whose composite is $f$. 
Let $\bar X^a$ denote the pseudo-adic space associated to $(\bar X,\bar X_0,L_{\bar X})$. 
Then $j^a\colon X^a\to\bar X^a$ is an open immersion and $\bar f^a\colon \bar X^a\to Y^a$ is a proper morphism (this can be checked using \cite[Corollary 1.3.9, Lemma 1.3.10]{Hub96}, see also \cite[Lemma 3.5]{Mie06}). 
Thus, by the definition of the compactly supported direct image \cite[Definition 5.4.4]{Hub96}, we have an identification $Rf^a_!=R\bar f^a_*\circ j^a_!$ of functors $D^+(X^a,\Lambda)\to D^+(Y^a,\Lambda)$, for a torsion ring $\Lambda$.  

\subsection{Cohomology of fibers via nearby cycle}
Let $(X,X_0,L)$ be an admissible triple (Definition \ref{triple}) and $U$ the complement of $X_0$. 
For a torsion ring $\Lambda$ and an object $\mc F$ of $D^+(U,\Lambda)$, 
we put $\mc F^a=\varphi_X^*\mc F$. 
The diagram (\ref{2}) induces a canonical morphism
\ben\label{ptwise}
i^*Rj_*\mc F\to R\lambda_{X*}\mc F^a,
\een
which is an isomorphism by the proof of \cite[Corollary 3.5.15]{Hub96} (cf. \cite[Theorem 6.5.4]{Fuj95}).

Here, we give an interpretation of the canonical morphism (\ref{ptwise}) via vanishing topos. 
We consider the diagram of topoi
\ben\label{diagram}\bxy{
X^a\ar[r]^-{\til\lambda_{X}}\ar[rd]_{\lambda_X}
&L\vani_XU\ar[r]^-{p_2}\ar[d]^{p_1}
&U\ar[d]^j
\\
&L\ar[r]^{i}
&X
}\exy\een
with canonical $2$-morphisms $jp_2\to ip_1$ and $p_1\til\lambda_{X}\cong\lambda_X$. 
This diagram induces the morphisms
\ben\label{seq of morphisms}
i^*Rj_*\mc F\to Rp_{1*}p_2^*\mc F\to R\lambda_{X*}\mc F^a,
\een
whose composite is identified with the morphism (\ref{ptwise}). 

\begin{lem}\label{interpretation}
The two morphisms in (\ref{seq of morphisms}) are isomorphisms. 
\end{lem}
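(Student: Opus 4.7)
My plan is to exploit the fact that, by the very construction of $\til\lambda_X$ in \S\ref{til lambda}, the composite of the two morphisms in (\ref{seq of morphisms}) is identified with the morphism (\ref{ptwise}), which is an isomorphism by the proof of \cite[Corollary 3.5.15]{Hub96}. Hence it suffices to prove that the first morphism $i^*Rj_*\mc F\to Rp_{1*}p_2^*\mc F$ is an isomorphism; the second will then follow automatically by two-out-of-three.

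The first morphism is the base change arrow associated to the inner square formed by $p_1,p_2,i,j$ in (\ref{diagram}) together with its canonical $2$-morphism $jp_2\to ip_1$. Since $L\vani_XU$ is coherent it has enough points, so I would check the claim stalkwise at each geometric point $y$ of $L$. On the source side, the classical computation of the stalk of $Rj_*$ at $y$ for the open immersion $j\colon U\to X$ gives
\be
(i^*Rj_*\mc F)_y\cong R\Gamma(X_{(y)}\times_XU,\mc F),
\ee
where $X_{(y)}$ denotes the strict henselization of $X$ at $y$. On the target side, I would appeal to the description of the localization of a vanishing topos at a geometric point \cite[\S1.12]{Ill14}: the slice $(L\vani_XU)_{/y}$ is equivalent to $\{y\}\vani_{X_{(y)}}(X_{(y)}\times_XU)$. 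Because $y$ is the unique closed point of $X_{(y)}$, any geometric point of $X_{(y)}\times_XU$ admits a canonical specialization to $y$ inside $X_{(y)}$, and inspecting the definition of the oriented product identifies this slice with the \'etale topos of $X_{(y)}\times_XU$; under this identification $p_2$ becomes the natural inclusion $X_{(y)}\times_XU\hookrightarrow U$, which yields $(Rp_{1*}p_2^*\mc F)_y\cong R\Gamma(X_{(y)}\times_XU,\mc F)$.

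It then remains to verify that, under these identifications of the two stalks with $R\Gamma(X_{(y)}\times_XU,\mc F)$, the map on stalks induced by the $2$-morphism $jp_2\to ip_1$ is the identity. This is the main piece of bookkeeping and should be formal from the explicit description of points of oriented products and the construction of the base change morphism; I do not expect any substantive obstacle beyond carefully tracking these canonical isomorphisms.
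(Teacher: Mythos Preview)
Your overall strategy coincides with the paper's: establish that the first arrow is an isomorphism, then deduce the second from the fact that the composite equals (\ref{ptwise}). The difference lies only in how the first arrow is handled. The paper dispatches it in one line by citing \cite[Th\'eor\`eme 2.4]{Ill14}, which asserts precisely that the base change morphism $i^*Rj_*\to Rp_{1*}p_2^*$ associated to the square $(p_1,p_2,i,j)$ is an isomorphism. Your stalkwise argument is essentially a reproof of (the relevant case of) that theorem: the step you pass over with ``inspecting the definition of the oriented product identifies this slice with the \'etale topos of $X_{(y)}\times_XU$'' is exactly the substantive content of Illusie's result (it amounts to showing that $p_2\colon \{y\}\vani_{X_{(y)}}W\to W$ is an equivalence when $X_{(y)}$ is strictly local with closed point $y$). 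So your route is correct and more self-contained, but what you flag at the end as mere ``bookkeeping'' is in fact the heart of the matter, and the paper's approach simply outsources it.

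One small inaccuracy: the coherence of $L\vani_XU$ is not what justifies the stalkwise check. The morphism $i^*Rj_*\mc F\to Rp_{1*}p_2^*\mc F$ lives on $L$, and you are checking stalks at geometric points of the scheme $L$, which has enough points for the usual reason.
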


%
\begin{proof}
The first one is an isomorphism by \cite[Th\'eor\`eme 2.4]{Ill14}, 
and so is the second, since the composite is an isomorphism as mentioned above. 
\end{proof}

\begin{const}\label{over S}
Let $(S,S_0,L_S)$ be the admissible triple associated to a microbial valuation ring with separably closed filed of 	fractions (Definition \ref{val triple}). 
Let $f\colon (X,X_0,L_X)\to (S,S_0,L_S)$ be a morphism of admissible triples with $f\colon X\to S$ being locally of finite type. 
Then we have the associated morphism $f^a\colon X^a\to S^a$ of pseudo-adic spaces, which is locally of finite type. 

We denote the generic point (resp.\ closed point) of $S$ by $\eta$ (resp.\ $s$). 
For a torsion ring $\Lambda$ and a sheaf $\mc F$ of $\Lambda$-modules on $X_{\eta,\et}$, we consider the nearby cycle complex $R\psi_S\mc F=i^*Rj_*\mc F$ over $S$, where $j$ is the natural open immersion $X_\eta\to X$ and $i$ is the natural closed immersion $X_s\to X$.  
We denote by $\mc F^a$ the pullback of $\mc F$ by the analytification morphism $\varphi_X\colon X^a_\et\to X_{\eta,\et}$. 
Then the canonical isomorphism $R\psi_S\mc F\cong R\lambda_{X*}\mc F^a$ induces a canonical isomorphism 
\be
R\Gamma(L_X,R\psi_S\mc F)\cong R\Gamma(X^a,\mc F^a).\ee
\end{const}

\begin{const}\label{over S with c}
In the situation of Construction \ref{over S}, we assume that the morphism $f\colon X\to S$ is compactifiable. 
We recall the construction of a canonical isomorphism from \cite[Theorem 5.7.8]{Hub96}
\ben\label{comparison}
R\Gamma_c(L_X,R\psi_S\mc F)\cong R\Gamma_c(X^a,\mc F^a).
\een
We take a proper morphism $\bar f\colon \bar X\to S$ with an open immersion $j\colon X\to \bar X$ such that $f=\bar fj$. 
Let $\bar X_0$ be the pullback $\bar f^{-1}(S_0)$ and $L_{\bar X}$ be the closure of $L_X$ in $\bar X_0$, so that we have morphisms $(X,X_0,L_X)\overset{j}{\to}(\bar X,\bar X_0,L_{\bar X})\overset{\bar f}{\to}(S,S_0,L_S)$ of admissible triples whose composite is $f$. 
The induced morphism $\bar f^a\colon \bar X^a\to S^a$ is proper, and $j^a\colon X^a\to \bar X^a$ is an open immersion, as recalled in the beginning of this section. 
We consider the diagram of topoi
\bx{
X^a\ar[r]^{\lambda_X}\ar[d]^{j^a}
&L_X\ar[d]^j
\\\bar X^a\ar[r]^{\lambda_{\bar X}}
&L_{\bar X}.
}\ex
The isomorphism $R\psi_S\mc F\cong R\lambda_{X*}\mc F^a$ and the canonical isomorphism $j_!R\lambda_{X*}\cong R\lambda_{\bar X*}j^a_!$ from \cite[Corollary 3.5.11]{Hub96} induce 
\be
R\Gamma_c(L_X,R\psi_S\mc F)&\cong&R\Gamma_c(L_X,R\lambda_{X*}\mc F^a)
=R\Gamma(L_{\bar X},j_!R\lambda_{X*}\mc F^a)
\\&\cong&R\Gamma(L_{\bar X},R\lambda_{\bar X*}j^a_!\mc F^a)
\\&\cong&R\Gamma(\bar X^a,j^a_!\mc F^a)=R\Gamma_c(X^a,\mc F^a).
\ee
\end{const}

\subsection{Compactly supported direct image via nearby cycles over general bases}

We globalize Huber's construction of the isomorphism between the \'etale cohomology of an algebraizable rigid analytic variety and the nearby cycle cohomology. 

We frequently use the fact that the \'etale topos of any pseudo-adic space has enough points (\cite[Proposition 2.5.5]{Hub96}, see also \cite[Proposition 2.5.17]{Hub96}). 

\begin{const}
Let $f\colon (X,X_0,L_X)\to (Y,Y_0,L_Y)$ be a morphism of admissible triples (Definition \ref{triple}) and let $f^a\colon X^a\to Y^a$ denote the induced morphism of analytic pseudo-adic spaces. 
We consider the natural diagram of topoi
\ben\label{rig and vani}\bxy{
X^a\ar[dd]_{f^a}\ar[r]^-{\til\lambda_X}
&L_X\vani_XX\ar[d]^{\id\vani f}\ar[r]^-{p_2}
&X
\\&L_X\vani_YY\ar[d]^{f\vani\id}
\\Y^a\ar[r]_-{\til\lambda_Y}
&L_Y\vani_YY.
}\exy\een
Here $\til\lambda_X$ and $\til\lambda_Y$ denote the morphisms defined in \S\ref{til lambda}. 
We denote the natural morphism $L_X\vani_XX\to L_Y\vani_YY$ by $\ola f$. 
The diagram above induces the base change morphism $\til\lambda_Y^*R\ola f_*\to Rf^{a}_*\lambda_X^*$. 

Let $\Lambda$ be a ring and $\mc F$ be an object of $D^+(X,\Lambda)$. 
Note that we have canonical isomorphisms 
$R\Psi_f\mc F\cong 
R(\id\vani f)_*R\Psi_\id\mc F\cong R(\id\vani f)_*p_2^*\mc F$ from Remark \ref{p2comp}. 
Thus, the base change morphism $\til\lambda_Y^*R\ola f_*\to Rf^{a}_*\til\lambda_X^*$ induces 
\be
\til\lambda_Y^*R(f\vani\id)_*R\Psi_f\mc F\cong \til\lambda_Y^*R\ola f_*p_2^*\mc F\to Rf^{a}_*\til\lambda_X^*p_2^*\mc F\cong Rf^{a}_*\mc F^a.
\ee
\end{const}

We also give a variant for higher direct image with compact support.  
For the construction, we use the following lemma.
\begin{lem}\label{j of rig and vani}
Let $j\colon (X,X_0,L_X)\to (\bar X,\bar X_0,L_{\bar X})$ be a morphism of admissible triples with $j\colon X\to \bar X$ being an open immersion. 
Let $j^a\colon X^a\to \bar X^a$ denote the induced open immersion of analytic pseudo-adic spaces. 
We consider the diagram of topoi
\bx{
X^a\ar[r]^{j^a}\ar[d]^{\til\lambda_X}
&\bar X^a\ar[d]^{\til\lambda_{\bar X}}
\\
L_X\vani_XX\ar[r]^{\ola j}
&L_{\bar X}\vani_{\bar X}\bar X.
}\ex
Let $\Lambda$ be a ring. 
Then the natural morphism
\be
j^a_!\til\lambda_X^*\to \til\lambda_{\bar X}^*\ola j_!
\ee
of functors $D^+(L_X\vani_XX,\Lambda)\to D^+(\bar X^a,\Lambda)$ is an isomorphism. 
\end{lem}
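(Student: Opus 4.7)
The plan is to verify the morphism at stalks, using that the \'etale topos of any pseudo-adic space has enough points (\cite[Proposition 2.5.5]{Hub96}).

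First, I would confirm that $j^a_!$ is extension by zero along the open immersion $j^a\colon X^a\to \bar X^a$ of pseudo-adic spaces, as recalled at the beginning of \S\ref{sec rigid}, and that $\ola j_!$ is extension by zero along an open immersion of topoi. For the latter, the natural identification $L_X\vani_XX \cong L_X\vani_{\bar X}\bar X$ holds because $\bar X_{(x)} = X_{(x)}$ for any $x\in L_X$, as $X$ is an open neighborhood of $x$ in $\bar X$; combined with the open inclusion $L_X\hookrightarrow L_{\bar X}$ (which follows from the equality $L_X = L_{\bar X}\cap X_0$ and the fact that $X_0$ is open in $\bar X_0$), this exhibits $\ola j$ as an open immersion of topoi, so $\ola j_!$ is extension by zero.

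Second, I would describe $\til\lambda_{\bar X}(\xi)$ for an arbitrary geometric point $\xi$ of $\bar X^a$. By Example \ref{geom pt}, $\xi$ corresponds to a microbial valuation ring $\kappa(\xi)^+$ with separably closed field of fractions, marked at the closed point of $\Spa(\kappa(\xi),\kappa(\xi)^+)$. Unwinding the construction in \S\ref{til lambda}, the image $\til\lambda_{\bar X}(\xi)$ is the point $(x\gets z)$ of $L_{\bar X}\vani_{\bar X}\bar X$, where $x$ is the image in $L_{\bar X}$ of the closed point of $\Spec\kappa(\xi)^+$ and $z$ is the image in $\bar X$ of the generic point.

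Third, I would do a case analysis in $\mc G \in D^+(L_X\vani_XX,\Lambda)$. If $\xi$ factors through $X^a$, then $x\in L_X$ and $z\in X$, so $(x\gets z)$ lies in the open subtopos $L_X\vani_XX$; both stalks equal $\mc G_{(x\gets z)}$ via the compatible factorization through $\til\lambda_X$. If $\xi$ does not factor through $X^a$, then $x\notin L_X$, so $(x\gets z)\notin L_X\vani_XX$, hence $(\ola j_!\mc G)_{(x\gets z)}=0$, while also $(j^a_!\til\lambda_X^*\mc G)_\xi=0$ by extension by zero, so both sides vanish. Compatibility of these identifications with the natural morphism (defined by the unit/counit of the open-immersion adjunctions) is then automatic. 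The main obstacle is the topological bookkeeping around the identification $L_X\vani_XX \cong L_X\vani_{\bar X}\bar X$ and the openness of $\ola j$; once those are in place, the stalkwise check is essentially formal.
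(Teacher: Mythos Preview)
Your proposal is correct and follows essentially the same approach as the paper: both arguments check the morphism on stalks at geometric points $\xi$ of $\bar X^a$, and both reduce to the key topological fact that $\xi$ lies in $X^a$ if and only if the first component $x$ of $\til\lambda_{\bar X}(\xi)$ lies in $L_X$ (equivalently, $\lambda_{\bar X}^{-1}(L_X)=X^a$). You supply somewhat more detail than the paper (the explicit identification $L_X\vani_XX\cong L_X\vani_{\bar X}\bar X$ and the openness of $\ola j$), but the core of the argument is identical.
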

\begin{proof}
Let $\mc F$ be an object of $D^+(L_X\vani_XX,\Lambda)$. 
We show that, for each geometric point $\xi$ of $\bar X^a$, the morphism $(j^a_!\til\lambda_X^*\mc F)_\xi\to (\til\lambda_{\bar X}^*\ola j_!\mc F)_\xi$ induced on the stalks is an isomorphism. 
For this, it suffices to show that $\xi$ lies over $X^a$ if $x\gets\xi$ lies over $L_X\vani_XX$, where $x\gets \xi$ denotes, by abuse of notation, the image of $\xi$ by $\til\lambda_{\bar X}$. 
Since $x\gets\xi$ lies over $L_X\vani_XX$ if and only if $x$ lies over $L_X$, the assertion follows from the fact that $\lambda_{\bar X}^{-1}(L_X)=X^a$ as a subset of $\bar X^a$. 
\end{proof}

\begin{const}\label{rel comparison}
Let $f\colon (X,X_0,L_X)\to (Y,Y_0,L_Y)$ be a morphism of admissible triples (Definition \ref{triple}) and let $f^a\colon X^a\to Y^a$ denote the induced morphism. 
We now assume that $f\colon X\to Y$ is compactifiable and choose a proper morphism $\bar f\colon \bar X\to Y$ with an open immersion $j\colon X\to \bar X$ such that $f=\bar fj$. 
Let $\bar X_0$ be the pullback $\bar f^{-1}(Y_0)$ and $L_{\bar X}$ be the closure of $L_X$ in $\bar X_0$, so that we have morphisms $(X,X_0,L_X)\overset{j}{\to}(\bar X,\bar X_0,L_{\bar X})\overset{\bar f}{\to}(Y,Y_0,L_Y)$ of admissible triples whose composite is $f$. 

Let $\bar X^a$ denote the pseudo-adic space associated to $(\bar X,\bar X_0,L_{\bar X})$. 
Then $j^a\colon X^a\to\bar X^a$ is an open immersion and $\bar f^a\colon \bar X^a\to Y^a$ is a proper morphism, 
as recalled in the beginning of this section. 

Let $\Lambda$ be a torsion ring. 
Then we have the following sequence of canonical morphisms of functors $D^+(L_X\vani_XX,\Lambda)\to D^+(Y^a,\Lambda)$;
\be
\til\lambda_Y^*R\ola f_!
\cong
\til\lambda_Y^*R\ola{\bar f}_*\ola j_!
\to
R\bar f^a_*\til\lambda_{\bar X}^*\ola j_!
\cong
R\bar f^a_*j^{a}_!\til\lambda_X^*=Rf^a_!\til\lambda_X^*,
\ee
where 
the first isomorphism comes from Lemma \ref{j and push}, the second is the base change morphism induced by the diagram (\ref{rig and vani}) for $\bar f\colon \bar X\to Y$, and the third one comes from Lemma \ref{j of rig and vani}. 

Let $\mc F$ be an object of $D^+(X,\Lambda)$. 
By considering $p_2^*\mc F$, we obtain a canonical morphism 
\be
\theta_f\colon \til\lambda_Y^*R(f\vani\id)_!R\Psi_f\mc F\cong \til\lambda_Y^*R\ola f_!p_2^*\mc F\to Rf^a_!\mc F^a.
\ee 


\end{const}



To state the compatibility of the canonical morphism $\theta_f$ with base change, we make the following construction. 
\begin{const}

Let 
\bx{
(X',X'_0,L_{X'})\ar[r]^{g'}\ar[d]^{f'}
&(X,X_0,L_X)\ar[d]^{f}
\\(Y',Y'_0,L_{Y'})\ar[r]^g
&(Y,Y_0,L_Y)
}\ex
be a commutative diagram of admissible triples. 
We assume that $f\colon X\to Y$ is compactifiable and that the diagram is Cartesian, i.e, the diagram \bx{
X'\ar[r]^{g'}\ar[d]^{f'}
&X\ar[d]^{f}
\\Y'\ar[r]^g
&Y,
}\ex
is a Cartesian diagram of schemes and ${f'}^{-1}(L_{Y'})\cap{g'}^{-1}(L_{X})=L_{X'}$. 

Then, for a torsion ring $\Lambda$ and an object $\mc F$ of $D^+(X,\Lambda)$, we have the base change morphism 
\be
\alpha\colon \ola g^*R(f\vani\id)_!R\Psi_f\mc F\to R(f'\vani\id)_!R\Psi_{f'}({g'}^*\mc F)
\ee
defined in Construction \ref{bc xi}. 
On the other hand, we have a Cartesian diagram 
\bx{
{X'}^a\ar[r]^{{g'}^a}\ar[d]^{{f'}^a}
&X^a\ar[d]^{f^a}
\\{Y'}^a\ar[r]^{g^a}
&Y^a
}\ex
of pseudo-adic spaces, which induces the base change morphism
\be
\beta\colon (g^a)^*Rf^a_!\mc F^a\to R{f'}^a_!{{g'}^a}^*\mc F^a.
\ee
Thus, we have the diagram
\ben\label{bc of comp}\bxy{
\til\lambda_{Y'}^*{\ola g}^*R(f\vani\id)_!R\Psi_f\mc F\ar[r]^{\til\lambda_{Y'}^*\alpha}\ar[d]^{(g^a)^*\theta_{f}}
& \til\lambda_{Y'}^*R(f'\vani\id)_!R\Psi_{f'}({g'}^*\mc F)\ar[d]^{\theta_{f'}}
\\
(g^a)^*Rf^a_!\mc F^a\ar[r]^\cong_\beta
&R{f'}^a_!({g'}^a)^*\mc F^a
}\exy\een
in $D^+({Y'}^a,\Lambda)$. 
\end{const}

\begin{lem}\label{comm w bc}
\begin{enumerate}
\item The diagram (\ref{bc of comp}) is commutative. 
\item If the formation of $R\Psi_f\mc F$ commutes with base change, then the morphism $\alpha$ is an isomorphism.
\item The morphism $\beta$ is an isomorphism if one of the following conditions is satisfied:
\begin{enumerate}
\item $\Lambda$ is killed by an integer invertible in $Y$,
\item $(Y',Y'_0,L_{Y'})$ is the admissible triple associated to a geometric point $\xi$ of $Y^a$ (Example \ref{geom pt}) and $g\colon(Y',Y'_0,L_{Y'})\to(Y,Y_0,L_Y)$ is the natural morphism. 
\end{enumerate}
\end{enumerate}
\end{lem}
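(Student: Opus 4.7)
The three statements are largely independent, so I would handle them in order of increasing subtlety.

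Part (2) is essentially unpacking the definition. The morphism $\alpha$ from Construction \ref{bc xi} is the composite of the base change isomorphism $\ola g^*R(f\vani\id)_!R\Psi_f\mc F \cong R(f'\vani\id)_!\ola{g'}^*R\Psi_f\mc F$ of Lemma \ref{p1cpush} with $R(f'\vani\id)_!$ applied to the base change morphism $\ola{g'}^*R\Psi_f\mc F \to R\Psi_{f'}{g'}^*\mc F$; under the hypothesis that the latter is an isomorphism, so is $\alpha$.

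For part (3), case (a) is Huber's proper base change theorem for $Rf^a_!$ with torsion coefficients of order invertible on the base, applied to the induced Cartesian diagram of pseudo-adic spaces $X'^a, X^a, Y'^a, Y^a$. In case (b), $Y'^a$ is identified with the geometric point $\xi$ via Example \ref{geom pt}, and the base change morphism $\beta$ is, on global sections, the canonical comparison between the stalk $(Rf^a_!\mc F^a)_\xi$ and the compactly supported cohomology $R\Gamma_c(X'^a, ({g'}^a)^*\mc F^a)$ of the fiber. This comparison is an isomorphism by the same reasoning used to justify Construction \ref{over S with c}, ultimately resting on Huber's computation of stalks of $Rf^a_!$ at (higher-rank) geometric points.

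Part (1) is the main work. The plan is to decompose the diagram (\ref{bc of comp}) according to the three-step factorization of $\theta_f$ and $\theta_{f'}$ from Construction \ref{rel comparison}: first the isomorphism $R\ola f_! \cong R\ola{\bar f}_*\ola j_!$ of Lemma \ref{j and push}, then the base change morphism induced by the diagram (\ref{rig and vani}) for $\bar f$, and finally the isomorphism $\til\lambda_{\bar X}^*\ola j_! \cong j^a_!\til\lambda_X^*$ of Lemma \ref{j of rig and vani}. In parallel, $\alpha$ decomposes via Construction \ref{bc bc} through the intermediate vanishing topoi. One then obtains three subdiagrams whose commutativity is a formal consequence of the compatibility of base change morphisms with horizontal and vertical pasting of $2$-commutative squares, together with the naturality of the isomorphisms from Lemmas \ref{j and push} and \ref{j of rig and vani} and the functoriality of $\ola j_!$ and $j^a_!$ with respect to pullback.

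The main obstacle is bookkeeping in part (1): three levels of topoi intervene at once, and one must first fix a compactification of $f'$ compatible with the chosen compactification of $f$, together with compatible support structures, before the pasting diagram can be drawn. Once this setup is in place, commutativity follows by routine $2$-categorical diagram chasing, with no new geometric input required beyond Lemmas \ref{p1cpush}, \ref{j and push}, and \ref{j of rig and vani}.
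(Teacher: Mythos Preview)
Your treatment of parts (2) and (3) matches the paper's: part (2) is indeed immediate from the definition of $\alpha$, and part (3) is exactly the citation to Huber's results \cite[Theorem 5.3.9, Corollary 5.3.10]{Hub96}.

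For part (1), your approach is correct but more laborious than the paper's. Rather than decomposing $\theta_f$ and $\theta_{f'}$ into their three constituent steps and checking three subdiagrams separately, the paper invokes Construction~\ref{bc bc} once: that construction already identifies $\alpha$ with the base change morphism $\ola g^*R\ola f_!p_2^*\mc F\to R\ola{f'}_!\ola{g''}^*p_2^*\mc F$ associated to the square of vanishing topoi. With this identification in hand, both composites $\theta_{f'}\circ\til\lambda_{Y'}^*\alpha$ and $\beta\circ(g^a)^*\theta_f$ are seen at once to be the base change morphism associated to the single pasted square
\[
\begin{array}{ccc}
{X'}^a & \longrightarrow & L_X\vani_XX \\
\downarrow & & \downarrow \\
{Y'}^a & \longrightarrow & L_Y\vani_YY,
\end{array}
\]
hence equal. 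Your three-step decomposition ultimately encodes the same pasting compatibility, but the paper avoids choosing a compatible compactification of $f'$ and tracking the intermediate subdiagrams explicitly, since Construction~\ref{bc bc} has already absorbed that work. What your approach buys is perhaps a more self-contained verification that does not rely on having set up Construction~\ref{bc bc} in advance; what the paper's approach buys is brevity and a clearer conceptual picture of why the diagram commutes.
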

\begin{proof}
1. 
By Construction \ref{bc bc}, the morphism $\alpha$ is identified with the base change morphism 
\be
\ola{g}^*R\ola f_!p_2^*\mc F\to R\ola{f'}_!\ola{g'}^*p_2^*\mc F
\ee
induced by the diagram
\bx{
L_{X'}\vani_{X'}X'\ar[r]^{\ola g'}\ar[d]^{\ola f'}
&L_X\vani_XX\ar[d]^{\ola f}
\\L_{Y'}\vani_{Y'}Y'\ar[r]^{\ola g}
&L_Y\vani_YY.
}\ex
Thus, both of $\theta_{f'}\circ\til\lambda_{Y'}^*\alpha$ and $\beta\circ({g^a})^*\theta_{f}$ are identified with the base change morphism induced by the diagram
\bx{
{X'}^a\ar[r]\ar[d]
&L_X\vani_XX\ar[d]
\\{Y'}^a\ar[r]
&L_Y\vani_YY.
}\ex
\\
\\
2. This is clear from the definition of $\alpha$.
\\
3. This follows from \cite[Theorem 5.3.9, Corollary 5.3.10]{Hub96}. 
\end{proof}

\begin{lem}\label{case of pt}
Let $(S,S_0,L_S)$ be the admissible triple associated to a microbial valuation ring with separably closed filed of 	fractions (Definition \ref{val triple}). 
Let $f\colon (X,X_0,L_X)\to (S,S_0,L_S)$ be a morphism of admissible triples with $f\colon X\to S$ being compactifiable. 
Let $\Lambda$ be a torsion ring and $\mc F$ an object of $D^+(X,\Lambda)$. 
Then the morphism 
\ben\label{rig nearby c}
\til\lambda_S^*R(f\vani\id)_!R\Psi_f\mc F\to Rf^{a}_!\mc F^a
\een
in $D^+(S^a,\Lambda)$ is identified with the morphism (\ref{comparison}) in Construction \ref{over S with c} via the equivalence $R\Gamma(S^a,-)\colon D^+(S^a,\Lambda)\cong D^+(\Lambda)$ (Remark \ref{punc}.2), and hence is an isomorphism.  
\end{lem}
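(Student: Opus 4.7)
The plan is to choose a compactification $X \xrightarrow{j} \bar X \xrightarrow{\bar f} S$ of $f$---the same as used in both Construction \ref{over S with c} and Construction \ref{rel comparison}---and to unwind the definition of $\theta_f$ step by step, matching it with Huber's comparison isomorphism (\ref{comparison}). Since $K$ is separably closed, Remark \ref{punc}.2 gives that $S^a$ is equivalent to the punctual topos, so $R\Gamma(S^a, -)\colon D^+(S^a,\Lambda) \to D^+(\Lambda)$ is an equivalence of categories; it thus suffices to verify the identification after applying $R\Gamma(S^a, -)$ to $\theta_f$.

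By Lemma \ref{j and push}, $\theta_f$ factors as $\til\lambda_S^* R\ola{\bar f}_* \ola j_! p_2^* \mc F \xrightarrow{\mathrm{bc}} R\bar f^a_* \til\lambda_{\bar X}^* \ola j_! p_2^* \mc F \xrightarrow{\sim} R\bar f^a_* j^a_! \mc F^a$, where the first arrow is the base change coming from the $2$-commutative diagram defining $\theta_{\bar f}$ and the second is the isomorphism of Lemma \ref{j of rig and vani}. Under $R\Gamma(S^a,-)$, the right-hand side becomes $R\Gamma(\bar X^a, j^a_! \mc F^a) = R\Gamma_c(X^a, \mc F^a)$ by properness of $\bar f^a$. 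For the left-hand side, the unique point of $S^a$ maps under $\til\lambda_S$ to the specialization $(s \gets \eta)$ in $L_S \vani_S S$, with $\eta = \Spec K$ the geometric generic point. Applying Lemma \ref{p1push} to $\bar f \vani \id$ identifies the stalk at $(s \gets \eta)$ with global sections on a suitable fiber of $L_{\bar X} \vani_{\bar X} \bar X$ lying over $\eta$; invoking Lemma \ref{interpretation} for $\bar X$ (which identifies $Rp_{1*} p_2^*$ with $i_{\bar X}^* Rj_{\bar X *}$), this is rewritten as $R\Gamma_c(L_X, R\psi_S \mc F)$---the left-hand side of (\ref{comparison}).

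Finally, one verifies that the base change morphism inside $\theta_f$, under these identifications, coincides with the morphism built in Construction \ref{over S with c} from $R\psi_S \mc F \cong R\lambda_{X*} \mc F^a$ (via Lemma \ref{interpretation}) together with $j_! R\lambda_{X*} \cong R\lambda_{\bar X *} j^a_!$ of \cite[Corollary 3.5.11]{Hub96}. The main obstacle will be this last step of bookkeeping: tracking the various base change morphisms across the several $2$-commutative squares linking the vanishing topoi to the analytic pseudo-adic spaces, and checking they fit together. Conceptually, however, both morphisms are induced by essentially the same diagram $\bar X^a \to L_{\bar X} \vani_{\bar X} \bar X \to L_S \vani_S S$ restricted over the unique point of $S^a$, so once the identifications are set up the match should follow from universal properties.
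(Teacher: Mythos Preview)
Your approach is correct in outline and would work, but the paper takes a cleaner route that avoids the explicit compactification bookkeeping you anticipate as ``the main obstacle.'' Rather than unwinding $\theta_f$ through $\bar X$ via Lemma~\ref{j and push} and Lemma~\ref{j of rig and vani}, the paper observes directly that---since both $S^a$ and $L_S\vani_S\eta$ are punctual---the morphism $\theta_f$ is identified with the map
\[
R\Gamma_c(L_X\vani_X X_\eta,\,p_2^*\mc F)\longrightarrow R\Gamma_c(X^a,\mc F^a)
\]
induced by the adjunction $p_2^*\mc F\to R\til\lambda_{X*}\mc F^a$ at the level of $X$ itself. Pushing this adjunction down along $p_1\colon L_X\vani_X X_\eta\to L_X$ yields
\[
R\Gamma_c(L_X,\,Rp_{1*}p_2^*\mc F)\longrightarrow R\Gamma_c(L_X,\,R\lambda_{X*}\mc F^a)\cong R\Gamma_c(X^a,\mc F^a),
\]
and now Lemma~\ref{interpretation} (applied to $X$, not $\bar X$) identifies this with the sequence~(\ref{seq of morphisms}), hence with Construction~\ref{over S with c}.

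The difference, then, is that you track the compactification explicitly through $L_{\bar X}\vani_{\bar X}\bar X$ and must reconcile $\ola j_!$ with $j^a_!$ and $j_!$ at the end, whereas the paper recognizes that the compactification can be absorbed into the symbol $R\Gamma_c$ on both sides from the outset, reducing everything to the single adjunction at $X$. Your route is more faithful to the literal construction of $\theta_f$; the paper's buys a shorter argument by exploiting that the compactification is the \emph{same} on both sides and hence cancels.
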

\begin{proof}
Since $S^a$ and $L_S\vani_S\eta$, where $\eta$ is the generic point of $S$, are equivalent to the punctual topos, the morphism (\ref{rig nearby c}) is identified with the morphism
\ben\label{Gamma}
R\Gamma_c(L_X\vani_XX_\eta,p_2^*\mc F)\to R\Gamma_c(X^a,\mc F^a)
\een
induced by the adjunction morphism $\theta\colon p_2^*\mc F\to R\til\lambda_{X*}\til\lambda_X^*p_2^*\mc F=R\til\lambda_{X*}\mc F^a$. 
We consider the diagram of topoi
\bx{
X^a\ar[r]^-{\til\lambda_{X}}\ar[rd]_{\lambda_X}
&L_X\vani_XX_\eta\ar[r]^-{p_2}\ar[d]^{p_1}
&X_\eta\ar[d]^j
\\&L_X\ar[r]^i
&X.
}\ex
The morphism (\ref{Gamma}) is identified with the morphism
\be
R\Gamma_c(L_X,Rp_{1*}p_2^*\mc F)\to R\Gamma_c(L_X,R\lambda_{X*}\mc F^a)\cong R\Gamma_c(X^a,\mc F^a) 
\ee
induced from the above diagram. 
Then the assertion follows from Lemma \ref{interpretation}. 
\end{proof}

\begin{thm}\label{globalization}
Let $f\colon (X,X_0,L_X)\to (Y,Y_0,L_Y)$ be a morphism of admissible triples (Definition \ref{triple}) with $f\colon X\to Y$ being compactifiable. 
Let $\Lambda$ be a torsion ring and $\mc F$ an object of $D^+(X,\Lambda)$. 
We assume that the formation of $R\Psi_f\mc F$ commutes with base change. 
Then the morphism
\be
\theta_f\colon \til\lambda_Y^*R(f\vani\id)_!R\Psi_f\mc F\to Rf^{a}_!\mc F^a
\ee
from Construction \ref{rel comparison} is an isomorphism. 
\end{thm}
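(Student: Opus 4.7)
The plan is to prove that $\theta_f$ is an isomorphism by checking on stalks at every geometric point of $Y^a$, then reducing to the local case already handled in Lemma \ref{case of pt}. Since the \'etale topos of a pseudo-adic space has enough points (a fact repeatedly invoked in the paper), it suffices to verify that for every geometric point $\xi$ of $Y^a$, the morphism $\theta_f$ is an isomorphism on the stalk at $\xi$.

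By Example \ref{geom pt}, such a geometric point $\xi$ is identified with the pseudo-adic space $S^a$ associated to the admissible triple $(S,S_0,L_S)$ coming from a microbial valuation ring $\kappa(\xi)^+$ with separably closed field of fractions, and the geometric point comes from a natural morphism $g\colon (S,S_0,L_S)\to(Y,Y_0,L_Y)$ of admissible triples. I would form the Cartesian base change of $f$ along $g$, obtaining $f'\colon (X',X'_0,L_{X'})\to (S,S_0,L_S)$ with $f'\colon X'\to S$ still compactifiable, and a natural morphism $g'\colon (X',X'_0,L_{X'})\to(X,X_0,L_X)$. This sets up the diagram (\ref{bc of comp}).

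Now I would apply Lemma \ref{comm w bc}. Part 1 gives commutativity of (\ref{bc of comp}). Since by hypothesis the formation of $R\Psi_f\mc F$ commutes with base change, Part 2 gives that $\alpha$ is an isomorphism. Since $g$ is precisely of the form in Part 3(b), $\beta$ is also an isomorphism. By commutativity, the vertical map $(g^a)^*\theta_f$ is an isomorphism if and only if $\theta_{f'}$ is. Since $S^a$ is equivalent to the punctual topos (Remark \ref{punc}.2), $(g^a)^*\theta_f$ is nothing but the stalk of $\theta_f$ at $\xi$.

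It therefore remains to show that $\theta_{f'}$ is an isomorphism, and this is exactly the content of Lemma \ref{case of pt}, which in turn identifies $\theta_{f'}$ with the comparison isomorphism from Construction \ref{over S with c} proved in Huber's book. I do not expect a serious obstacle: the real work has been done in setting up the base change formalism (Construction \ref{bc xi}, Construction \ref{bc bc}), in verifying the compatibilities in Lemma \ref{comm w bc}, and in establishing the pointwise comparison in Lemma \ref{case of pt}. The only delicate point is checking that the hypothesis on $R\Psi_f\mc F$ is all that is needed to activate Lemma \ref{comm w bc}.2; this is built into that lemma's statement, so no further preservation-under-base-change argument is required.
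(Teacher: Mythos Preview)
Your proposal is correct and follows essentially the same approach as the paper's proof: check on stalks at geometric points via Example \ref{geom pt}, form the Cartesian base change, apply Lemma \ref{comm w bc} (parts 1, 2, and 3(b)) to reduce to the case over $(S,S_0,L_S)$, and conclude by Lemma \ref{case of pt}. You have spelled out the use of each part of Lemma \ref{comm w bc} more explicitly than the paper does, but the argument is the same.
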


\begin{proof}
We prove that $\theta_f$ induces an isomorphism of the stalks at each geometric point $\xi$ of $Y^a$. 
Example \ref{geom pt} shows that we have the natural morphism $(S,S_0,L_S)\to (Y,Y_0,L_Y)$ of admissible triples such that the associated morphism $S^a\to Y^a$ of pseudo-adic spaces is canonically identified with the geometric point $\xi\to Y^a$. 
By forming a Cartesian diagram 
\bx{
(X',X'_0,L_{X'})\ar[r]\ar[d]
&(X,X_0,L_X)\ar[d]
\\(S,S_0,L_S)\ar[r]
&(Y,Y_0,L_Y)
}\ex
of admissible triples and applying Lemma \ref{comm w bc} to it, the problem is reduced to the case where $(Y,Y_0,L_Y)=(S,S_0,L_S)$. 
Then the assertion follows from Lemma \ref{case of pt}.
\end{proof}

\begin{proof}[Proof of Theorem \ref{construction}]
Follows from Proposition \ref{globalization} and \cite[Th\'eor\`eme 2.1]{Org06}. 
\end{proof}

\begin{rem}
It seems impossible to give a similar comparison isomorphism for $Rf^a_*$ (see Remark \ref{not qc}). 
\end{rem}

\section{Quasi-constructibility}
\label{sec blfin}
In \S\ref{ssec blfin}, we state and deduce from Theorem \ref{globalization} our finiteness result (Theorem \ref{modfin}). 
In \S\ref{ssec d2}, we focus on the case where the target is the $2$-dimensional disk to give a concrete consequence Proposition \ref{lc} of Theorem \ref{modfin}. 

\subsection{Modification and finiteness}
\label{ssec blfin}
\begin{defn}
Let $\sf X$ be an analytic pseudo-adic space which is quasi-compact and quasi-separated. 
Let $\Lambda$ be a noetherian ring. 
\bn\item Let $\msf F$ be a sheaf of $\Lambda$-modules on $\sf X$. 
We say that $\msf F$ is {\it strictly quasi-constructible} if there exist a finite partition $\msf X=\coprod_{i\in I}\msf L_i$ by locally closed constructible subsets $\msf L_i$ and a finite partiion $\msf X=\coprod_{j\in J} \msf Z_j$ by Zariski locally closed subsets $\msf Z_j$ such that the restriction of $\msf F$ to $\msf L_i\cap \msf Z_j$ is locally constant of finite type for every $(i,j)\in I\times J$. 
\item Let $\msf F$ be an object of $D^b(\msf X,\Lambda)$. We say that $\msf F$ is {\it strictly quasi-constructible }if the all cohomology sheaves $\mc H^i(\msf F)$ are strictly quasi-constructible. 
\en
\end{defn}

\begin{rem}
If $\msf F$ is strictly quasi-constructible, then $\msf F$ is quasi-constructible in the sense of \cite[Definition 1.1]{Hub98}. 
\end{rem}

\begin{lem}\label{pullback by lambda}
Let $(X,X_0,L_X)$ be an admissible triple (Definition \ref{triple}) with $X$ being quasi-compact and quasi-separated and let $X^a$ denote the associated analytic pseudo-adic space (\S3.1). 
We consider the natural morphism $\til\lambda_X\colon X^a\to L_X\vani_XU$ defined in \S\ref{til lambda}, where $U$ is the complement of $X_0$. 
Let $\Lambda$ be a noetherian ring and $\mc K$ a constructible sheaf of $\Lambda$-modules on $L_X\vani_XU$. 
Then $\til\lambda_X^*\mc K$ is strictly quasi-constructible. 
\end{lem}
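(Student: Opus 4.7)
The plan is to produce an explicit decomposition of $X^a$ witnessing strict quasi-constructibility of $\til\lambda_X^*\mc K$ by transporting the constructibility data of $\mc K$ through the morphism $\til\lambda_X$. First I would apply the definition of constructibility to $\mc K$ on $L_X\vani_XU$ (both $L_X$ and $U$ being coherent since $X$ is qcqs and $X_0\subset X$ is constructible closed) to obtain finite partitions $L_X=\coprod_{i\in I}L_{X,i}$ and $U=\coprod_{j\in J}U_j$ by locally closed constructible subsets such that $\mc K|_{L_{X,i}\vani_XU_j}$ is locally constant of finite type for every pair $(i,j)$.

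Next I would set $\msf L_i=\lambda_X^{-1}(L_{X,i})$ and $\msf Z_j=\varphi_X^{-1}(U_j)$ as subsets of $X^a$, yielding finite partitions $X^a=\coprod_i\msf L_i=\coprod_j\msf Z_j$. Three claims remain to be checked. First, each $\msf L_i$ is a locally closed constructible subset of $X^a$ in the pseudo-adic sense: this is built into the construction of \S\ref{sec adic}, since $(X,X_0,L_{X,i})$ is an admissible triple and its associated pseudo-adic space is, by definition, $(X^a,\lambda_X^{-1}(L_{X,i}))$, which exhibits $\msf L_i$ as a bona fide locally closed constructible pseudo-adic subspace of $X^a$. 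Second, each $\msf Z_j$ is a Zariski locally closed subset of $X^a$: this is essentially definitional, as $U_j$ is locally closed in the scheme $U$ and $\varphi_X$ is the analytification morphism, whose preimages of locally closed subschemes of $U$ are Zariski locally closed in $X^a$.

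For the third claim, that $(\til\lambda_X^*\mc K)|_{\msf L_i\cap\msf Z_j}$ is locally constant of finite type, the key point is that the restriction of $\til\lambda_X$ to $\msf L_i\cap\msf Z_j$ factors on the level of topoi through the sub-vanishing-topos $L_{X,i}\vani_XU_j\hookrightarrow L_X\vani_XU$. Indeed, by the construction of $\til\lambda_X$ recalled in \S\ref{til lambda}, the universal property of the oriented product gives canonical isomorphisms $p_1\circ\til\lambda_X\cong\lambda_X$ and $p_2\circ\til\lambda_X\cong\varphi_X$, so on the intersection $\msf L_i\cap\msf Z_j$ the image lands in $L_{X,i}$ under $p_1$ and in $U_j$ under $p_2$. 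Consequently, the restriction is the pullback of the locally constant sheaf of finite type $\mc K|_{L_{X,i}\vani_XU_j}$, and pullback of topoi preserves this property.

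The hard part will be cleanly identifying $\msf L_i$ and $\msf Z_j$ as subsets of $X^a$ of the correct topological type in the pseudo-adic formalism, rather than as mere subsets of the underlying set; both identifications, however, reduce to standard unwinding of the definitions in \cite[\S1.10, \S3.5]{Hub96}, so once this is in hand the overall argument is immediate.
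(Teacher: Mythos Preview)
Your approach is correct and essentially identical to the paper's proof: partition $L_X$ and $U$ according to the constructibility of $\mc K$, pull the strata back along $\lambda_X$ and $\varphi_X$ respectively, and verify the three properties you list. The paper first reduces to $L_X=X_0$ for notational convenience; note also that your justification for the first claim is slightly off, since the pieces $L_{X,i}$ are only locally closed in $X_0$ and so $(X,X_0,L_{X,i})$ is not literally an admissible triple in the sense of Definition~\ref{triple}---but the claim itself follows directly from the spectrality of $\lambda_X$ (or by writing $L_{X,i}$ as a difference of constructible closed subsets).
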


\begin{proof}
We may assume that $L_X=X_0$. We take finite partitions $X_0=\coprod_{i\in I}L_i$ and $U=\coprod_{j=1}^rZ_j$ by locally closed subsets $L_i\subset X_0$ and $Z_j\subset U$ such that $\mc K$ is locally constant on $L_i\vani_XZ_j$. 
Let $\msf L_i$ be the pullback of $L_i$ by the map $\lambda_X\colon \msf X\to X_0$ and $\msf Z_i$ the pullback of $Z_i$ by the map $\varphi_X\colon \msf X\to U$. 
Then each $\msf L_i$ is a constructible locally closed subset, and each $\msf Z_j$ is a Zariski locally closed subset. 
Further, the restriction of $\til\lambda^*\mc K$ to $\msf L_i\cap\msf Z_j$ is locally constant. 
Thus, $\til\lambda_X^*\mc K$ is quasi-constructible. 
\end{proof}


\begin{defn}
A morphism $X'\to X$ of schemes is called a {\it modification }if it is proper and surjective, if there exists a dense open subscheme $U$ of $X$ such that the restriction $X'\times_XU\to U$ is an isomorphism, and if every generic point (i.e, maximal point) of $X'$ is sent to a generic point of $X$.  
\end{defn}

\begin{thm}\label{modfin}
Let $f\colon (X,X_0,L_X)\to (Y,Y_0,L_Y)$ be a morphism of admissible triples (Definition \ref{triple}). 
We assume that $Y$ is a coherent scheme having only finitely many irreducible components and that $f\colon X\to Y$ is separated and of finite presentation. 
Let $n$ be an integer invertible on $Y$ and $\mc F$ be an object of $D^b_c(X,\Z/n\Z)$. 
Then there exists a modification $\pi\colon Y'\to Y$ such that the complex ${\pi^a}^*Rf^a_!\mc F^a$ is strictly quasi-constructible. 
\end{thm}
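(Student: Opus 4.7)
The plan is to combine the comparison theorem (Theorem \ref{globalization}) with two finiteness inputs: Orgogozo's theorem on commutation of nearby cycles with base change after a modification, and the constructibility of the sheaves $R(f\vani\id)_!R\Psi_f\mc F$ (Lemma \ref{c of family of psi}). The role of the modification $\pi\colon Y'\to Y$ in the conclusion will be exactly the one produced by Orgogozo's theorem.

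More precisely, I would first invoke \cite[Théorème 2.1]{Org06} to produce a modification $\pi\colon Y'\to Y$ such that, after base change along $\pi$, the formation of the nearby cycle complex commutes with every further base change. Let $f'\colon X'\to Y'$ denote the base change of $f$ and $\mc F'$ the pullback of $\mc F$, and endow the source and target with the induced admissible triple structures $(X',X'_0,L_{X'})$, $(Y',Y'_0,L_{Y'})$ with $Y'_0=\pi^{-1}(Y_0)$, $L_{Y'}=\pi^{-1}(L_Y)$, and similarly for $X'$. Since $\pi$ is proper (hence surjective and finite type), this base change does preserve the admissibility condition, and $f'$ inherits the properties of being separated and of finite presentation.

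Now I would apply Theorem \ref{globalization} to $f'\colon (X',X'_0,L_{X'})\to (Y',Y'_0,L_{Y'})$: because the formation of $R\Psi_{f'}\mc F'$ commutes with base change, the canonical morphism
\[
\theta_{f'}\colon \til\lambda_{Y'}^*\,R(f'\vani\id)_!R\Psi_{f'}\mc F'\longrightarrow Rf'^{a}_!\mc F'^{a}
\]
is an isomorphism in $D^{+}({Y'}^{a},\Z/n\Z)$. On the other hand, Lemma \ref{c of family of psi} shows that $R(f'\vani\id)_!R\Psi_{f'}\mc F'$ belongs to $D^b_c(L_{Y'}\vani_{Y'}Y',\Z/n\Z)$; here one needs that $Y'$ is still coherent with finitely many irreducible components, which follows from the definition of a modification (surjectivity and the condition on generic points). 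Applying Lemma \ref{pullback by lambda} componentwise to the cohomology sheaves, we conclude that $\til\lambda_{Y'}^*\,R(f'\vani\id)_!R\Psi_{f'}\mc F'$, and therefore $Rf'^{a}_!\mc F'^{a}$, is strictly quasi-constructible on ${Y'}^{a}$.

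It remains to identify this with ${\pi^{a}}^{*}Rf^{a}_!\mc F^{a}$, which is the content of the proper base change isomorphism $\beta$ of Lemma \ref{comm w bc}(3a), valid because $n$ is invertible on $Y$. This gives a canonical isomorphism ${\pi^{a}}^{*}Rf^{a}_!\mc F^{a}\cong Rf'^{a}_!\mc F'^{a}$, and hence the left-hand side is strictly quasi-constructible. The main technical obstacle is essentially hidden in the inputs I am invoking — namely Theorem \ref{globalization} and Orgogozo's theorem — so at this level the proof is a short assembly; the only thing to check carefully is that the pullback of admissible triples along a modification remains admissible and that coherence and the finiteness-of-components hypothesis are preserved, which is routine.
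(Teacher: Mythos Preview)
Your proof is correct and follows essentially the same route as the paper: invoke Orgogozo's theorem to arrange that $R\Psi_{f'}\mc F'$ commutes with base change, then combine Theorem \ref{globalization}, Lemma \ref{c of family of psi}, and Lemma \ref{pullback by lambda}. The only cosmetic difference is that the paper uses the base change isomorphism for $Rf^a_!$ (your Lemma \ref{comm w bc}(3a), equivalently \cite[Theorem 5.3.9]{Hub96}) at the outset to simply replace $Y$ by $Y'$, whereas you carry the base change explicitly and invoke it at the end; the content is identical.
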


\begin{proof}
Recall that the formation of $Rf^a_!$ commutes with base change (\cite[Theorem 5.3.9]{Hub96}). 
Hence, we can freely replace $Y$ by a modification. 
By \cite[The\'eor\`eme 2.1]{Org06}, there exists a modification $Y'\to Y$ such that the formation of $R\Psi_{f'}\mc F'$ commutes with base change, where $f'$ is the base change $X'=X\times_YY'\to Y'$ and $\mc F'$ is the pullback of $\mc F$ by the projection $X'\to X$. 
Thus, we may assume that the formation of $R\Psi_f\mc F$ commutes with base change. 
Then, by Proposition \ref{globalization}, we have $\til\lambda_Y^*R(f\vani\id)_!R\Psi_f\mc F\cong Rf^a_!\mc F^a$. 
On the other hand, by Lemma \ref{c of family of psi}, $R(f\vani\id)_!R\Psi_f\mc F$ is constructible. 
Thus, by Lemma \ref{pullback by lambda}, $Rf^a_!\mc F^a$ is strictly quasi-constructible. 
\end{proof}

In particular, Theorem \ref{blfin} follows. 

\begin{rem}\label{not qc}
The above theorem does not hold for the direct image without support. 
In general, the complex $Rf^a_*\mc F^a$ is not of the form $\til\lambda_Y^*\mc G$ for some object $\mc G$ of $D^b_c(L_Y\vani_YY,\Lambda)$. 
As showed in \cite[Example 1.1]{Hub98b}, $Rf^a_*\mc F^a$ is not quasi-constructible in general. 
\end{rem}

\subsection{Quasi-constructible sheaves on the $2$-dimensional disk}
\label{ssec d2}
In this subsection, we fix a non-archimedean field $k$ and use the notations in Example \ref{rigid}. 

\begin{lem}
\label{lc locus}
Let $\pi\colon S'\to S=\Spec k^\circ[s,t]$ be a modification and $\msf F$ a sheaf of $\Lambda$-modules on $S^\rig\cong\Spa(k\la s,t\ra,k^\circ\la s,t\ra)$, for a noetherian ring $\Lambda$.  
We assume that ${\pi^\rig}^{*}\msf F$ is strictly quasi-constructible. 
Then there exist an integer $\nu\ge0$ and $\delta\in |k^\times|$ such that $\msf F$ is locally constant on the open subset $\{x\in S^\rig\mid 0<|s(x)|\le|t^\nu(x)| \text{ and }|t(x)|\le\delta\}$. 
\end{lem}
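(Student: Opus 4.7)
The plan is to choose $\nu$ and $\delta$ so that the region $W=\{x\in S^\rig\mid 0<|s(x)|\le|t(x)|^\nu,\ |t(x)|\le\delta\}$ lies in the complement of a certain closed subset $K\subset S^\rig$, off of which $\msf F$ is locally constant. Concretely, let $V\subset S$ be a dense Zariski open such that $\pi|_{\pi^{-1}(V)}$ is an isomorphism, and set $Z=S\setminus V$. Using the strict quasi-constructibility of $\pi^{\rig*}\msf F$, I choose the given finite partitions $S'^\rig=\coprod_i\msf L_i=\coprod_j\msf Z_j$ and let $\msf U=\msf L_0\cap\msf Z_0$ denote a stratum whose interior in $S'^\rig$ is open and dense (such a stratum exists because the partition is finite, so at least one intersection has nonempty interior, and a finite union of nowhere-dense sets is nowhere dense). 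Then $K:=Z^\rig\cup\pi^\rig(\msf U^c)$ is closed in $S^\rig$ by properness of $\pi^\rig$; and on $S^\rig\setminus K$ the morphism $\pi^\rig$ is an isomorphism factoring through $\msf U$, so $\msf F|_{S^\rig\setminus K}$ is locally constant. It therefore suffices to find $\nu,\delta$ with $W\cap K=\emptyset$.

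The core technical step will be a Weierstrass-style avoidance lemma: for any nonzero $f\in k\langle s,t\rangle$, there exist $\nu\ge 0$ and $\delta\in|k^\times|$ such that $|f(x)|>0$ for every $x\in W(\nu,\delta)$. I would prove this by expanding $f$ as a power series in $s$ over $k\langle t\rangle$, writing $f=s^{k_0}\bigl(c_0(t)+c_1(t)s+\cdots\bigr)$ with $c_0\in k\langle t\rangle$ nonzero (where $k_0$ is the largest power of $s$ dividing $f$), and expanding $c_0(t)=c_mt^m(1+u(t))$ with $u(0)=0$. For $|t|\le\delta$ sufficiently small, $|c_0(t)|=|c_m||t|^m$, while the tail satisfies $\sum_{i\ge1}|c_i(t)||s|^i\le C|s|$ for a uniform constant $C$. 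Taking $\nu>m$ and $\delta\in|k^\times|$ small enough that $\delta^{\nu-m}<|c_m|/C$, the leading term dominates on $W(\nu,\delta)$ and $|f(x)|\ge|s(x)|^{k_0}|c_0(t(x))|>0$, using $|s(x)|>0$.

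To apply this to avoid $K$: first, since $Z$ is nowhere dense in $S$, it is contained in $V(g)$ for some nonzero $g\in k^\circ[s,t]$, and the avoidance lemma applied to $g$ handles $Z^\rig$. Next, $\pi^\rig(\msf Z_0^c)$ is a proper Zariski-closed subset of $S^\rig$: it is Zariski-closed because $\pi^\rig$ is proper and $\msf Z_0^c$ is Zariski-closed in $S'^\rig$, and it is proper because $\msf U$ has open dense image under the isomorphism $\pi^\rig|_{(\pi^\rig)^{-1}(V^\rig)}$; hence it is cut out by nonzero analytic functions to which the avoidance lemma again applies. The main obstacle is the closed constructible piece $\pi^\rig(\msf L_0^c)$, which need not be Zariski-closed; here one would decompose it as a finite boolean combination of analytic inequalities $|f|\lhd|g|$ and verify, via the same leading-order analysis on $W$ (where $|s|$ is forced to be small compared to $|t|^\nu$), that each such inequality becomes strict in the direction required to exclude the stratum, for $\nu$ large and $\delta$ small. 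Combining the finitely many resulting conditions on $(\nu,\delta)$ by taking the maximum $\nu$ and the minimum $\delta$ yields the required region.
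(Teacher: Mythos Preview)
There is a genuine gap in how you select and handle the stratum $\msf L_0\cap\msf Z_0$. Your claim that some stratum has \emph{dense} interior is unfounded: $S'^{\rig}$ is not irreducible for its topology (already $\B^2_k$ has disjoint nonempty rational opens), so the pigeonhole argument only yields a stratum with \emph{nonempty} interior. Worse, even when $\pi=\id$, the stratum you end up with need not be the one meeting the regions $W(\nu,\delta)$. Take the two-piece partition $\msf L_0=\{|s|\ge|a|\}$, $\msf L_1=\{|s|<|a|\}$ for some $0<|a|<1$: both have nonempty interior, neither is dense, and every $W(\nu,\delta)$ with $\delta^\nu<|a|$ lies entirely in $\msf L_1$; if your procedure selects $\msf L_0$ then $K\supseteq\msf L_1\supseteq W$ and the argument collapses. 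The relevant stratum is the one through the \emph{origin}, and the key fact you are not using is that the origin is a maximal point of $\B^2_k$, so any constructible set containing it already contains an open neighborhood of it. Finally, your proposed treatment of $\pi^{\rig}(\msf L_0^c)$ via inequalities between $|f|$ and $|g|$ cannot work as written: those $f,g$ are sections on affinoid opens of $S'^{\rig}$, not elements of $k\langle s,t\rangle$, so there is no leading-order expansion in $s,t$ to appeal to after pushing forward.

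The paper inverts your strategy: rather than pushing the bad locus down through $\pi^{\rig}$, it lifts the region $W$ up into $S'^{\rig}$. By the valuative criterion for $\pi$ applied to the valuation ring $\varinjlim_\nu k[s/t^\nu,t]_{(s/t^\nu,t)}$, one finds $\nu\ge0$ and an open neighborhood $\msf U$ of the origin in $\Spa(k\langle s/t^\nu,t\rangle,k^\circ\langle s/t^\nu,t\rangle)$ together with a factorization $\msf U\to S'^{\rig}\to S^{\rig}$ of $\pi_\nu$. Pulling back reduces to the case $\pi=\id$ with $\msf F$ itself strictly quasi-constructible on a neighborhood of the origin in a fresh $\B^2$. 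Now one takes the constructible stratum $\msf L$ through the origin, uses maximality of the origin to get an open neighborhood inside $\msf L$, and applies your Weierstrass-type avoidance lemma (which is essentially correct, and is the paper's lemma producing $\nu,\delta$ with $\msf U\setminus\msf Z\supseteq W(\nu,\delta)$) to remove the remaining Zariski-closed piece.
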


\begin{lem}\label{bl an}
Let $\pi\colon S'\to S=\Spec k^\circ[s,t]$ be a modification. 
Then there exist an integer $\nu\ge0$, an open neighborhood $\msf U\subset \Spa(k\la\frac{s}{t^\nu},t\ra,k^\circ\la\frac{s}{t^\nu},t\ra)$ of the origin, and a commutative diagram 
\bx{\msf U\ar[r]\ar[rd]_{\pi_\nu}&(S')^\rig\ar[d]^{\pi}\\&S^\rig,}\ex
where $\pi_\nu$ is the morphism induces by the natural map $k\la s,t\ra\to k\la\frac{s}{t^\nu},t\ra$. 
\end{lem}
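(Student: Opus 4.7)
The strategy is to reduce to the case where $\pi$ is the blow-up of a coherent ideal via Raynaud--Gruson flatification, and then to apply the universal property of blow-ups together with an explicit pullback calculation.

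By the Raynaud--Gruson flatification theorem, the modification $\pi$ is dominated by the blow-up $\mathrm{Bl}_I S \to S$ of some coherent ideal $I \subset k^\circ[s, t]$; since any lift of $\pi_\nu$ through $\mathrm{Bl}_I S \to S$ induces one through $\pi$, we may assume $\pi = \mathrm{Bl}_I S \to S$. If $I$ is the unit ideal on a Zariski neighborhood of the origin of $S$, then $\pi$ is an isomorphism there and the lift is trivial (take $\nu = 0$). Otherwise $I \subset \mf m_0 = (s, t, \varpi)$, where $\varpi$ is a pseudo-uniformizer of $k^\circ$. Using the identity $\mathrm{Bl}_{fJ} S = \mathrm{Bl}_J S$ when $f \in k^\circ[s, t]$ is a non-zero-divisor, we iteratively remove common principal factors from $I$ so that no common non-unit factor divides all generators; in particular, some generator $g_1$ of $I$ satisfies $g_1(0, t) \neq 0$ in $k^\circ[t]$.

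By the universal property of blow-ups, transferred to the rigid-analytic setting through the identification of $(\mathrm{Bl}_I S)^\rig$ with the admissible formal blow-up of $I \cdot k^\circ\la s, t\ra$, the map $\pi_\nu|_\msf U \colon \msf U \to S^\rig$ lifts through $(\mathrm{Bl}_I S)^\rig \to S^\rig$ if and only if the pullback $I \cdot \mc O_\msf U$ is locally principal. Write $I = (g_1, \ldots, g_r)$ with $g_i = \sum c^{(i)}_{a, e} s^a t^e$, and for each $i$ let $b_i$ be the $t$-adic order of $g_i(0, t) \in k^\circ[t]$ (with $b_i = \infty$ if $g_i(0, t) = 0$); by the reduction above, $b_1 < \infty$. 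Set $b = \min_i b_i$. The pullback under $s \mapsto u t^\nu$ is
\[
g_i(u t^\nu, t) = \sum_{a, e} c^{(i)}_{a, e} u^a t^{a\nu + e} \in k^\circ\la u, t\ra.
\]
For $\nu$ greater than every finite $b_i$, each $g_i(u t^\nu, t)$ with $b_i < \infty$ equals $c^{(i)}_{0, b_i} t^{b_i}$ plus terms of strictly higher $t$-order, while each with $b_i = \infty$ lies in $(t^\nu) \subset (t^b)$. Restricting to the rational subdomain $\msf U = \{x \in \msf U_\nu \mid |t(x)| < \delta\}$ with $\delta \in |k^\times|$ small enough that each factor $c^{(i)}_{0, b_i} + t \psi_i(u, t)$ (for $b_i < \infty$) is a unit on $\msf U$, we obtain $I \cdot \mc O_\msf U = (t^b)$, which is locally principal; the universal property then yields the lift $\msf U \to (S')^\rig$.

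The main obstacle will be justifying the rigid-analytic version of the universal property of algebraic blow-ups (e.g., via Raynaud's equivalence between admissible formal blow-ups and rigid analytic modifications, which identifies $(\mathrm{Bl}_I S)^\rig$ with the formal blow-up of $I \cdot k^\circ\la s, t\ra$), along with carefully tracking that the reductions via removal of common principal factors yield an ideal for which the monomial-type pullback analysis near the origin remains valid.
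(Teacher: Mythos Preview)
Your approach is genuinely different from the paper's and considerably more involved. The paper never touches blow-ups or Raynaud--Gruson: it works purely algebraically over the field $k$. One pulls $\pi$ back to the $2$-dimensional regular local ring $A=k[s,t]_{(s,t)}$ and applies the valuative criterion of properness, using that $\varinjlim_\nu A[s/t^\nu]_{(s/t^\nu,t)}$ is a valuation ring with fraction field $\mathrm{Frac}(A)$ (this is the paper's next lemma). This yields, after spreading out, an algebraic lift $\Spec k[s/t^\nu,t,1/f]\to S'$ for some $\nu\ge0$ and $f\notin(s/t^\nu,t)$; then one simply applies $-\times_S S^\rig$ and uses $(S')^\rig\cong S'\times_S S^\rig$. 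No explicit ideal computations and no rigid-analytic universal properties are needed. Your route, by contrast, trades the valuative criterion for an explicit monomial analysis of the blow-up ideal; this buys concreteness but costs the technical justification you yourself flag.

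That flagged step is genuinely problematic as written. Your proposed identification of $(\mathrm{Bl}_I S)^\rig$ with ``the admissible formal blow-up of $I\cdot k^\circ\la s,t\ra$'' via Raynaud's equivalence is the wrong tool: admissible formal blow-ups require the ideal to be \emph{open} in $k^\circ\la s,t\ra$, and in that case they induce \emph{isomorphisms} on rigid generic fibers---exactly the opposite of what you need, since for $I=(s,t)$ the map $(\mathrm{Bl}_I S)^\rig\to S^\rig$ is certainly not an isomorphism. What you actually want is the fiber-product description $(\mathrm{Bl}_I S)^\rig\cong \mathrm{Bl}_I S\times_S S^\rig$ (as in \cite[Proposition~1.9.6]{Hub96}), after which one can either invoke the universal property of $\mathrm{Bl}_I S$ in locally ringed spaces, or more concretely observe that your computation makes every $g_j/g_i$ (for $i$ with $b_i=b$) power-bounded on $\msf U$, so the ring map defining the $i$-th affine chart of the blow-up extends to $\mc O_\msf U$. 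Either fix works, but once you see the paper's two-line valuative-criterion argument you will likely prefer it.
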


\begin{proof}
Applying Lemma \ref{alg bl} below to the regular local ring $k[s,t]_{(s,t)}$, we can find a commutative diagram 
\bx{
U=\Spec k[\frac{s}{t^\nu},t,\frac{1}{f}]\ar[r]\ar[rd]&S'\ar[d]\\
&S}\ex
for some $\nu\ge1$ and $f\in k[\frac{s}{t^\nu},t]$ which does not belong to the maximal ideal $(\frac{s}{t^\nu},t)$. 
We take $\msf U$ to be the fiber product $U\times_SS^\rig$ (\cite[Proposition 3.8]{Hub94}). 
Since we have a natural isomorphism $(S')^\rig\cong S'\times_SS^\rig$ (\cite[Proposition 1.9.6]{Hub96}), we obtain a desired commutative diagram by applying $\times_SS^\rig$ to the above commutative diagram. 
\end{proof}

\begin{lem}\label{alg bl}
Let $A$ be a regular local ring of dimension $2$ with regular parameters $s,t$. 
Let $\pi\colon X\to \Spec A$ be a modification. 
Then there exist an integer $\nu\ge0$ and a commutative diagram 
\bx{\Spec A[\frac{s}{t^\nu}]_{(\frac{s}{t^\nu},t)}\ar[r]\ar[rd]_{\pi_\nu}&X\ar[d]^{\pi}\\&\Spec A,}\ex
where $\pi_\nu$ is the natural morphism. 
\end{lem}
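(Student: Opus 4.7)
The plan is to apply the valuative criterion of properness to a valuation ring obtained as the union of the local rings $B_\nu := A[s/t^\nu]_{(s/t^\nu,t)}$, and then use that $X\to\Spec A$ is of finite type to cut down to a morphism from a single $\Spec B_\nu$.

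Set $K = \mathrm{Frac}(A)$ and form $R := \bigcup_{\nu\ge 0} B_\nu \subset K$. The inclusions $B_\nu \hookrightarrow B_{\nu+1}$ are local, because $s/t^\nu = t\cdot s/t^{\nu+1}$ so the maximal ideal $(s/t^\nu,t)$ is contained in $(s/t^{\nu+1},t)$; hence $R$ is a local subring of $K$ dominating $A$. The key technical point is that $R$ is a valuation ring. To see this, the height-one prime $(s)\subset A$ yields a discrete valuation ring $A_{(s)}$ whose residue field $\mathrm{Frac}(A/(s))$ is itself the fraction field of the discrete valuation ring $A/(s)$, with uniformizer $\bar t$. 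Composing gives a rank-two valuation $v$ on $K$ with $v(s) = (1,0)$ and $v(t) = (0,1)$ in the lex-ordered group $\Z\oplus\Z$. A direct computation shows $v(s/t^\nu) = (1,-\nu) > 0$ and that any element of $A[s/t^\nu]\setminus(s/t^\nu,t)$ has $v$-value $0$, yielding $R \subset R_v$; the reverse inclusion $R_v \subset R$ is a special case of the Zariski-Abhyankar theorem on quadratic transforms of two-dimensional regular local rings, applied to the standard tower $B_0\subset B_1\subset\cdots$.

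Granting that $R$ is a valuation ring, the valuative criterion applied to the proper morphism $\pi$ produces a unique morphism $\Spec R\to X$ over $\Spec A$ extending the generic point. Let $x\in X$ be the image of the closed point of $\Spec R$, and choose an affine open neighborhood $\Spec C\subset X$ of $x$ with $C$ a finitely generated $A$-algebra, generated say by $c_1,\dots,c_m$. The restriction of the lifted morphism to $\Spec C$ yields a ring homomorphism $\phi\colon C\to R$; each $\phi(c_i)$ lies in some $B_{\nu_i}$ because $R = \bigcup_\nu B_\nu$, so setting $N = \max_i\nu_i$ the image of $\phi$ is contained in $B_N$. Thus $\phi$ factors as $C\to B_N\hookrightarrow R$, giving the desired morphism $\Spec B_N\to\Spec C\hookrightarrow X$, and the composition with $\pi$ recovers $\pi_N$ because the induced map $A\to C\to B_N$ agrees with the canonical inclusion $A\hookrightarrow B_N$ (both being restrictions of $A\hookrightarrow R$).

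The main obstacle is the identification of $R$ as a valuation ring, i.e., the inclusion $R_v\subset R$. While $R\subset R_v$ and the local structure of $R$ are elementary, showing that every element of $R_v$ ultimately lies in some $B_\nu$ amounts to the fact that a valuation centered at $\mathfrak m_A$, whose center on each successive point-blowup lies on the ``$s$-chart'' $\Spec A[s/t^\nu]$, is computed by the standard tower of quadratic transforms; this is the essential geometric input. Once it is in place, the rest of the argument is a routine application of the valuative criterion together with finite generation of affine opens of $X$.
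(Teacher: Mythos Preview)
Your proof is correct and follows exactly the paper's approach: the paper's proof is a two-line sketch stating that the result ``follows from the valuative criterion of proper morphisms and the fact that $\varinjlim_\nu A[\frac{s}{t^\nu}]_{(\frac{s}{t^\nu},t)}$ is a valuation ring,'' and you have simply unpacked these two ingredients in detail (identifying the valuation explicitly and spelling out the finite-type reduction from $R$ to some $B_N$).
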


\begin{proof}
This follows from the valuative criterion of proper morphisms and the fact that 
$\varinjlim_\nu A[\frac{s}{t^\nu}]_{(\frac{s}{t^\nu},t)}$ is a valuation ring. 
\end{proof}

\begin{lem}\label{qc subset}
Let $\msf U\subset \Spa(k\la s,t\ra,k^\circ\la s,t\ra)$ be an open neighborhood of the origin and $\msf Z\subset U$ be a Zariski closed subset. 
Then there exist an integer $\nu\ge0$ and $\delta\in|k^\times|$ such that 
$\msf U\setminus \msf Z$ contains the open subset 
\be
\{x\in \Spa(k\la s,t\ra,k^\circ\la s,t\ra)\mid 0<|s(x)|\le|t^\nu(x)|\text{ and }|t(x)|\le\delta\}. 
\ee
\end{lem}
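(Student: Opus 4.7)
The plan is to reduce to the affinoid case and then verify by direct ultrametric estimates that a single defining function of $\msf Z$ cannot vanish on a sufficiently thin cusp at the origin.

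First I shrink $\msf U$. Since the closed polydisks $\msf V=\Spa(k\la\varpi^{-1}s,\varpi^{-1}t\ra,k^\circ\la\varpi^{-1}s,\varpi^{-1}t\ra)$ for $\varpi\in k^\times$ with $|\varpi|\le1$ form a fundamental system of neighborhoods of the origin in $\Spa(k\la s,t\ra,k^\circ\la s,t\ra)$, I choose one contained in $\msf U$. If $\nu\ge1$ and $\delta\le|\varpi|$, the cuspidal region lies inside $\msf V$, so I only need to verify that the region is disjoint from $\msf Z$. The intersection $\msf Z\cap\msf V$ is cut out by a finitely generated ideal $I\subset\mathcal O(\msf V)=k\la\varpi^{-1}s,\varpi^{-1}t\ra$. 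If the origin is not in $\msf Z$, closedness of $\msf Z$ already gives a smaller polydisk disjoint from $\msf Z$ and we are done; the case in which $\msf Z$ contains a full open neighborhood of the origin is the implicit exception to the lemma. Otherwise $I\neq 0$ and I pick a nonzero $f\in I$; since $V(f)\supset\msf Z\cap\msf V$, it is enough to show $f(x)\ne0$ on $\{0<|s(x)|\le|t(x)|^\nu,\,|t(x)|\le\delta\}$ for suitable $\nu,\delta$.

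Next, factor $f=s^at^bh$ in $\mathcal O(\msf V)$ with $a,b\ge0$ maximal, so that $h(s,0)\ne0$ and $h(0,t)\ne0$ as formal series. Expand $h=\sum_{j\ge0}q_j(s)t^j$ and let $m$ be the $t$-order of $h(0,t)$, so that $a_m:=q_m(0)\ne0$ and $q_j(0)=0$ for $j<m$; in particular $q_j(s)=s\cdot r_j(s)$ for $j<m$. The convergence of $h$ on $\msf V$ provides a uniform bound $M<\infty$ on $|q_j|_{\sup}$ and $|r_j|_{\sup}$, and by the ultrametric there is $\delta_1>0$ with $|q_m(s)|=|a_m|$ whenever $|s|\le\delta_1$. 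Separating the leading term, for $|t|\le1$ the ultrametric inequality yields
\be
|h(s,t)-q_m(s)t^m|\le\max\bigl(|s|\cdot M,\;M|t|^{m+1}\bigr),
\ee
where the first bound handles the terms with $j<m$ via $|q_j(s)|\le|s|M$ and the second handles $j>m$ via $|q_j(s)|\le M$ and $|t|^j\le|t|^{m+1}$.

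Finally I take $\nu=m+1$ and choose $\delta\in|k^\times|$ with $\delta\le\min(|\varpi|,\delta_1,1)$ and $\delta M<|a_m|$ (possible since $|k^\times|$ has arbitrarily small positive elements). On the cuspidal region $|s|\le|t|^{m+1}=|t|\cdot|t|^m\le\delta|t|^m$, so both $|s|\cdot M$ and $M|t|^{m+1}$ are bounded above by $\delta M\cdot|t|^m<|a_m||t|^m=|q_m(s)t^m|$. The ultrametric then forces $|h(x)|=|a_m||t(x)|^m>0$, and combined with $|s(x)|>0$ and $|t(x)|>0$ (the latter forced by $|s(x)|\le|t(x)|^\nu$ together with $|s(x)|>0$ for $\nu\ge1$) this gives $|f(x)|=|s(x)|^a|t(x)|^b|h(x)|>0$ on the whole region.

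The main obstacle is the ultrametric bookkeeping, in particular justifying the uniform bound $M$ and the equality $|q_m(s)|=|a_m|$ on a small $s$-disk; both are immediate from convergence and non-vanishing of the relevant series. The argument is essentially a hands-on version of Weierstrass preparation in the variable $t$, but I avoid invoking it directly.
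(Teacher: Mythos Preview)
Your proof is correct and follows essentially the same strategy as the paper's: reduce to an affinoid neighborhood, pick a single nonzero defining function $f$ for $\msf Z$, isolate its leading $t$-term along $s=0$, and use an ultrametric estimate to show $f$ does not vanish on a sufficiently thin cusp. The only cosmetic differences are that the paper rescales $f$ into $k^\circ\la s,t\ra$ (so all tail coefficients have norm $\le 1$, avoiding your explicit constant $M$) and only strips powers of $s$ rather than both $s$ and $t$; neither changes the argument.
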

\begin{proof}
We may assume that $\msf U=\Spa(k\la s,t\ra,k^\circ\la s,t\ra)$. 
We take a nonzero function $f\in k\la s,t\ra$ such that the zero of $f$ contains $\msf Z$, i.e, a nonzero element in an ideal defining $\msf Z$.  
We may assume that $\msf Z$ is the Zariski closed subset defined by $f$. 
Since we do not care about points $x$ with $|s(x)|=0$, we may assume that $f$ is not divided by $s$. 
Then, by replacing $f$ by $cf$ for some $c\in k^\times$ if needed, we can write 
\be
f=at^\nu+t^{\nu+1}g+sh
\ee 
with $a\in k^\times$, $g\in k^\circ\la t\ra$, and $h\in k^\circ\la s,t\ra$. 
Take $\delta\in|k^\times|$ such that $\delta<|a|$. 
Let $x\in\Spa(k\la s,t\ra,k^\circ\la s,t\ra)$ be a point which satisfies $|s(x)|\le |t^{\nu+1}(x)|$ and $|t(x)|\le\delta$. 
Then we have 
\be
|(t^{\nu+1}g+sh)(x)|\le|t^{\nu+1}(x)|\le \delta\cdot|t^\nu(x)|<|at^\nu(x)|,
\ee
and hence, $|f(x)|\ne0$. 
This proves the assertion. 
\end{proof}

\begin{proof}[Proof of Lemma \ref{lc locus}]
By Lemma \ref{bl an}, the problem is reduced to the case where $\mc F$ is strictly quasi-constructible. 
Then there exists a constructible subset $\msf L$ which contains the origin of $\B^2_k$ and a Zariski closed subset $\msf Z\subset \B^2_k$ such that $\mc F$ is locally constant on $\msf L\setminus(\msf L\cap \msf Z)$. 
Note that, since the origin is a maximal point, $\msf L$ contains an open neighbor hood of the origin. 
Then the assertion follows from Lemma \ref{qc subset}.
\end{proof}

\begin{cor}
\label{lc}
Let $f\colon X\to S=\Spec k^\circ[s,t]$ be a morphism separated of finite presentation and $\mc F$ be a constructible sheaf of $\Z/n\Z$-modules on $X$, for an integer $n$ invertible in $k^\circ$.  
Then there exist an integer $\nu\ge0$ and $\delta\in|k^\times|$ such that $Rf^\rig_!\mc F^\rig$ is locally constant on the open subset $S^\rig=\Spa(k\la s,t\ra,k^\circ\la s,t\ra)$ defined by $0<|s|\le|t^\nu|$ and $|t|\le\delta$. 
\end{cor}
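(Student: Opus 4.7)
The plan is to combine Theorem \ref{modfin} with Lemma \ref{lc locus}, which already does almost all of the work. First I would apply Theorem \ref{modfin} to the morphism $f\colon (X,X_0,X_0)\to (S,S_0,S_0)$ of admissible triples, where $X_0$ and $S_0$ denote the closed fibers over $k^\circ$. Since $S$ is noetherian with finitely many irreducible components, and $f$ is separated and of finite presentation, Theorem \ref{modfin} produces a modification $\pi\colon S'\to S$ such that the pullback ${\pi^\rig}^{*}Rf^\rig_!\mc F^\rig$ is strictly quasi-constructible as an object of $D^b((S')^\rig,\Z/n\Z)$.

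By definition, strict quasi-constructibility of a bounded complex means that each of its cohomology sheaves is strictly quasi-constructible. Thus for every $i$, the sheaf ${\pi^\rig}^{*}R^if^\rig_!\mc F^\rig$ is strictly quasi-constructible on $(S')^\rig$. I would then apply Lemma \ref{lc locus} to each cohomology sheaf $R^if^\rig_!\mc F^\rig$ with the same modification $\pi$, which yields integers $\nu_i\ge 0$ and elements $\delta_i\in|k^\times|$ such that $R^if^\rig_!\mc F^\rig$ is locally constant on
\[
\msf U_{\nu_i,\delta_i}:=\{x\in S^\rig\mid 0<|s(x)|\le|t^{\nu_i}(x)|\text{ and }|t(x)|\le\delta_i\}.
\]

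Since $Rf^\rig_!\mc F^\rig$ is a bounded complex (its cohomology vanishes outside a finite range because $f^\rig$ is of finite dimension and $\mc F^\rig$ is bounded), only finitely many $R^if^\rig_!\mc F^\rig$ are nonzero. Setting $\nu=\max_i\nu_i$ and $\delta=\min_i\delta_i$, one has $\msf U_{\nu,\delta}\subset\msf U_{\nu_i,\delta_i}$ for every nonzero index $i$, so every $R^if^\rig_!\mc F^\rig$ is locally constant on $\msf U_{\nu,\delta}$, and hence $Rf^\rig_!\mc F^\rig$ is locally constant on this open subset, as required.

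There is no substantial obstacle here: all the nontrivial input, namely the reduction from the compactly supported direct image to nearby cycles over general bases and the control over the strictly quasi-constructible locus on the disk, has already been established in Theorem \ref{modfin} and Lemma \ref{lc locus}. The only technical point to keep track of is the passage from a single sheaf (as in Lemma \ref{lc locus}) to a bounded complex, which is handled by the max/min argument above.
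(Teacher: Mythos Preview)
Your proposal is correct and is exactly the argument the paper has in mind: the corollary is stated without proof immediately after Lemma \ref{lc locus}, as a direct consequence of Theorem \ref{modfin} and Lemma \ref{lc locus}. The only detail you add beyond the paper's implicit argument is the passage from a single sheaf to a bounded complex via the max/min of the finitely many $(\nu_i,\delta_i)$, which is routine and correct (note $|t(x)|\le 1$ on $S^\rig$, so $\nu\ge\nu_i$ indeed gives $|t^\nu(x)|\le|t^{\nu_i}(x)|$).
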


\section{Tubular neighborhoods}\label{sec tube}
In this section, we fix a non-archimedean field $k$ and assume that $k$ is of characteristic zero. 
We also fix an integer $n$ invertible in $k^\circ$. 
We introduce some notations. 
The rigid affine line $\Spec k[t]\times_{\Spec k}\Spa(k,k^\circ)$ (\cite[Proposition 3.8]{Hub94}) is denoted by $\A$. 
For $r\in |k^\times|$, the disk (resp.\ punctured disk) of radius $r$, i.e, the open subset $\{x\in \A\mid |t(x)|\le r\}$ (resp.\ $\{x\in\A\mid 0<|t(x)|\le r\}$) is denoted by $\B(r)$ (resp.\ $\B^*(r)$). 
We put $\B=\B(1)$ and $\B^*=\B^*(1)$. 

Let $p\colon P\to \Spec k^\circ[t]$ be a morphism of schemes separated of finite type and $X$ a closed subscheme of $P$ defined by a global function $g\in\mc O_P(P)$. 
We denote the natural morphism $X\to \Spec k[t]$ by $f$. 
For an integer $\nu\ge0$ and an element $\e\in |k^\times|$, we consider the closed subset $X^\rig_{\nu,\e}=\{x\in P^\rig\times_{\B}\B^*\mid |g(x)|<\e\cdot|t^\nu(x)|\}$ and denote the natural morphism $X^\rig_{\nu,\e}\to \B^*$ of pseudo-adic spaces by $f^\rig_{\nu,\e}$.

\begin{prop}\label{tube}
Let $\mc F$ be a constructible sheaf of $\Z/n\Z$-modules on $P$. 
There exist an integer $\nu\ge0$ and an element $\e_0\in|k^\times|$ satisfying the following property: 
For every $\e\in|k^\times|$ with $\e\le\e_0$, there exists an element $\delta\in |k^\times|$ such that the natural morphism 
\be
Rf^\rig_{\nu,\e!}\mc F^\rig\to Rf^\rig_!\mc F^\rig
\ee
is an isomorphism on $\B^*(\delta)$. 
\end{prop}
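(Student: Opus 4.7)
The strategy is to algebraize the defining function $g$ as an additional coordinate so that the tubular-neighborhood statement becomes a vanishing statement for compactly supported direct images on the $2$-dimensional base $\Spec k^\circ[s,t]$, where Corollary~\ref{lc} applies. Uniformity in the $t$-direction of Huber's $1$-dimensional tubular neighborhood result from~\cite{Hub98}, applied fiber-by-fiber, then yields the claim.

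First, consider the morphism $F\colon P\to S:=\Spec k^\circ[s,t]$ defined by $s\mapsto g$, compatible with the given structure map to $\Spec k^\circ[t]$, so that $p=q\circ F$ for the $t$-projection $q\colon S\to\Spec k^\circ[t]$. Under analytification, $X^\rig=(F^\rig)^{-1}(\{s=0\})\cap(P^\rig\times_\B\B^*)$ and $X^\rig_{\nu,\e}=(F^\rig)^{-1}(V_{\nu,\e})$ for $V_{\nu,\e}:=\{|s|<\e\cdot|t^\nu|,\,t\ne 0\}\subset S^\rig\times_\B\B^*$. The open--closed distinguished triangle attached to the closed immersion $X^\rig\hookrightarrow X^\rig_{\nu,\e}$, combined with proper base change along $F^\rig$, identifies the cone of the natural map $Rf^\rig_{\nu,\e,!}\mc F^\rig\to Rf^\rig_!\mc F^\rig$ with $Rq^\rig_{W_{\nu,\e},!}(\mc H|_{W_{\nu,\e}})[1]$, where $\mc H:=RF^\rig_!\mc F^\rig$ and $W_{\nu,\e}:=\{0<|s|<\e\cdot|t^\nu|,\,t\ne 0\}\subset S^\rig$. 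It therefore suffices to show this pushforward vanishes on some $\B^*(\delta)$ for $\e$ sufficiently small.

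Apply Corollary~\ref{lc} to $F$: there exist $\nu\ge 0$ and $\delta_0\in|k^\times|$ such that $\mc H$ is locally constant on $W^\circ:=\{0<|s|\le|t^\nu|,\,|t|\le\delta_0\}$. Fix this $\nu$; then for $\e\le 1$ the region $W_{\nu,\e}\cap\{|t|\le\delta_0\}$ is contained in $W^\circ$. Under the change of variables $s=s't^\nu$ (a chart of the relevant modification of $S$), $W^\circ$ is identified with the product $\{0<|s'|\le 1\}\times\B^*(\delta_0)$ and $W_{\nu,\e}\cap\{|t|\le\delta_0\}$ with $\{0<|s'|<\e\}\times\B^*(\delta_0)$. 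The pulled-back sheaf $\mc H'$ is locally constant on the product, and its restriction to the fiber over any geometric point $\bar t$ of $\B^*(\delta_0)$ is a locally constant sheaf on $\{0<|s'|\le 1\}$ whose monodromy representation of $\pi_1^{\et}(\{0<|s'|\le 1\})$ is, up to basepoint transport through the $t$-direction, independent of~$\bar t$.

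Now apply Huber's $1$-dimensional tubular neighborhood result from~\cite{Hub98} to the hypersurface $\{s'=0\}\subset\B$ with coefficients in $j_!(\mc H'|_{\bar t})$, where $j\colon\{0<|s'|\le 1\}\hookrightarrow\B$: there exists $\e_0\in|k^\times|$ such that for every $\e'\le\e_0$, $H^i_c(\{0<|s'|\le\e'\},\mc H'|_{\bar t})=0$ for all $i$; passing to the direct limit as $\e'\nearrow\e$ (using that $\{0<|s'|\le\e'\}$ is open in $\{0<|s'|<\e\}$ in the adic topology) gives $H^i_c(\{0<|s'|<\e\},\mc H'|_{\bar t})=0$ for every $\e\le\e_0$. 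Because the fiberwise representation is the same across~$\bar t$ and $\e_0$ depends only on this representation and the common ambient geometry, the same $\e_0$ works uniformly, so every classical geometric stalk of $Rq^\rig_{W_{\nu,\e},!}(\mc H|_{W_{\nu,\e}})$ over $\B^*(\delta_0)$ vanishes for $\e\le\e_0$. Combined with the quasi-constructibility of this pushforward over $\B^*$ (Huber's $1$-dimensional finiteness, \cite[Theorem~2.1]{Hub98}), this forces the pushforward to vanish as a sheaf on $\B^*(\delta)$ for any $\delta\le\delta_0$. The main obstacle is the uniformity in~$\bar t$ of Huber's tubular radius; this is precisely what the local constancy of~$\mc H$ from Corollary~\ref{lc} provides, by making the fiberwise monodromy representations canonically isomorphic across the $t$-family.
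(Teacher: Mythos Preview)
Your overall strategy matches the paper's proof exactly: factor $p$ through $F\colon P\to S=\Spec k^\circ[s,t]$ via $s\mapsto g$, apply Corollary~\ref{lc} to make $\mc H=RF^\rig_!\mc F^\rig$ locally constant on a region $\{0<|s|\le|t^\nu|,\,|t|\le\delta_0\}$, change variables $s=s't^\nu$, and reduce to showing that $R\pr_{2!}(\mc H|_{\{0<|s'|<\e\}\times\B^*(\delta_0)})$ vanishes on a small punctured disk. The paper isolates this last step as Lemma~\ref{gen lc}.

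The gap is in your justification of the uniform $\e_0$. You assert that Huber's tubular radius ``depends only on this representation and the common ambient geometry,'' but no such dependence is part of the statement of Huber's result; you are making a claim about its proof without substantiating it. The paper does not argue this way. Instead it proves a stronger and cleaner uniformity as a separate lemma (Lemma~\ref{Kum}): there is an $\e_0$ depending only on the base field $k$, not on $\mc M$ at all, such that $R\Gamma_c(\D^*(\e),\mc M)=0$ for \emph{every} locally constant constructible $\mc M$ on $\B^*$ and every $\e\le\e_0$. This comes directly from L\"utkebohmert's structure theorem for finite \'etale covers of annuli (Theorem~\ref{Lut}), which makes any such $\mc M$ Kummer---hence reducible to constant coefficients---once the radius is below a threshold depending only on $k$. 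With that lemma in hand, your transport-of-monodromy argument is unnecessary: one simply checks the stalk at a single classical point and invokes local constancy of the pushforward.

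A second, smaller issue: your final sentence claims vanishing on $\B^*(\delta)$ ``for any $\delta\le\delta_0$.'' Quasi-constructibility over $\B^*$ (via \cite[Theorem~2.1]{Hub98} and \cite[1.2.iv)]{Hub98}) only yields local constancy of the pushforward on some $\B^*(\delta)$ with $\delta$ depending on $\e$, after which vanishing of classical stalks forces vanishing. This is exactly the order of quantifiers in the statement of Proposition~\ref{tube}, and it is how the paper's proof of Lemma~\ref{gen lc} selects~$\delta$.
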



We recall a structure theorem of finite \'etale coverings of annuli due to L\"utkebohmert. 
For this, we introduce a terminology:
\begin{defn}
Let $r,R\in |k^*|$ be elements with $r\le R$ and $X\to \B(r,R)=\{x\in \A\mid r\le|t(x)|\le R\}$ be a finite \'etale covering. 
We say $X\to \B(r,R)$ is of {\it Kummer type }if it is isomorphic to the cover 
\be
\B(r^{1/d},R^{1/d})\to\B(r,R);z\to z^d.
\ee 
\end{defn}

Then L\"utkebohmert's structure theorem states the following. 
\begin{thm}[{\cite[Theorem 2.2]{Lut93}}]
\label{Lut}
Let $d\ge1$ be an integer and $k$ be an algebraically closed complete non-archimedean field extension of $\Q_p$. 
Then there exists an element $\beta\in|k^\times|$ such that every finite \'etale covering $X\to \B(r,R)$, for every $r\le R\in |k^\times|$ with $\beta^{-1}r\le\beta R$, is of Kummer type over $\B(\beta^{-1}r,\beta R)$.
\end{thm}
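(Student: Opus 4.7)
The plan is to induct on the degree $d$ of the cover. If $X\to\B(r,R)$ decomposes as a disjoint union of connected finite étale covers of smaller degrees $d_1,\dots,d_m$, the inductive hypothesis yields a $\beta(d_i)$ for each component; taking the minimum reduces the problem to the connected case. So assume $X\to\B(r,R)$ is connected of degree $d$, and one must produce $\beta=\beta(d)\in|k^\times|$ that works uniformly in $r,R$.

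For a connected cover, I would work with a formal model. After rescaling $t$, the annulus $\B(r,R)$ admits the standard formal model $\mathfrak A=\Spf k^\circ\langle u,v\rangle/(uv-\varpi)$ with $|\varpi|$ determined by $r,R$; its special fiber consists of two affine lines meeting transversely at a node. By Raynaud's theory of formal models, after an admissible blowup of $\mathfrak A$ and passage to normalization, the cover $X$ extends to a finite morphism $\mathfrak X\to\mathfrak A$. Since $X$ is étale on the generic fiber, any nontriviality of the ramification on the special fiber is concentrated in open neighborhoods of the two boundary circles $|t|=r$ and $|t|=R$, which correspond to the open ends of the two components of $\mathfrak A_s$.

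Next, I would bound the wildly ramified boundary region in terms of $d$ alone. Near each boundary circle, the cover pulls back to a finite étale cover of a thin annulus; its tame part is a Kummer cover by Abhyankar's lemma, while its wild part, present because $k$ has residue characteristic $p>0$, has Swan conductor bounded by $d$ and so lives in a collar whose log-radial thickness is controlled by an explicit function of $d$ and $v(p)$. Choosing $\beta\in|k^\times|$ small enough that $\B(\beta^{-1}r,\beta R)$ excludes both collars---a condition depending only on $d$, not on $r,R$---the restriction of $X$ to $\B(\beta^{-1}r,\beta R)$ is tame at both boundaries. A tame connected finite étale cover of an annulus over an algebraically closed non-archimedean field of characteristic zero has cyclic monodromy and is therefore of Kummer type, completing the induction.

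The main obstacle is the uniform quantitative control of the wild ramification. Qualitatively it is clear that any finite étale cover of $\B(r,R)$ becomes tame on a sufficiently small sub-annulus, but making the shrinkage factor $\beta$ depend only on $d$---and not on the particular cover or on $r,R$---requires Abbes--Saito type conductor estimates, or equivalently a Newton-polygon analysis of the discriminant of the minimal polynomial of a primitive element of the cover. This quantitative step is where the hypothesis $k\supseteq\Q_p$ enters essentially: in equal characteristic zero there is no wild ramification and one could take $\beta=1$, while in mixed characteristic one pays an explicit factor depending on $v(p)$ and $d$.
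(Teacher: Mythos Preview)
The paper does not prove this theorem at all: it is quoted verbatim as L\"utkebohmert's structure theorem with the citation \cite[Theorem 2.2]{Lut93} and used as a black box in the proof of Lemma~\ref{Kum}. There is therefore no proof in the paper to compare your proposal against.

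As for the proposal itself, it is a reasonable high-level narrative but not a proof. You correctly identify the crux---uniform control, in terms of $d$ alone, of the wildly ramified collar---and then defer it to ``Abbes--Saito type conductor estimates'' or a Newton-polygon analysis without carrying either out. That deferred step is essentially the entire content of L\"utkebohmert's theorem; the reduction to the connected case and the observation that tame covers of annuli are Kummer are the easy parts. Note also that L\"utkebohmert's actual argument in \cite{Lut93} does not proceed via Abbes--Saito filtrations (which postdate his paper) or via Swan conductors in the way you suggest; it is a direct analysis based on his earlier work on the semistable reduction of curves and a careful study of the discriminant of a generating equation over the annulus. If you want to reconstruct the proof, you should look there rather than at ramification-filtration machinery.
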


For $r\in |k^\times|$, the closed disk (resp.\ punctured closed disk) of radius $r$, i.e, the subset $\{x\in \A\mid |t(x)|< r\}$ (resp.\ $\{x\in\A\mid 0<|t(x)|< r\}$) is denoted by $\D(r)$ (resp.\ $\D^*(r)$). 
We put $\D=\D(1)$ and $\D^*=\D^*(1)$. 
We regard $\D(r)$ and $\D^*(r)$ as pseudo-adic spaces. 
\begin{lem}[{c.f. (II) in the proof of \cite[Theorem 2.5]{Hub98}}]
\label{Kum}
Assume that $k$ is algebraically closed. 
There exists an element $\e_0\in |k^\times|$ satisfying the following condition: 
Let $\mc M$ be a locally constant constructible sheaf of $\Z/n\Z$-modules on $\B^*$. 
Then for every $\e\le\e_0$, we have $R\Gamma_c(\D^*(\e),\mc M)=0$. 
\end{lem}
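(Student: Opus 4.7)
The plan is to follow the strategy of Huber's proof of \cite[Theorem 2.5(II)]{Hub98}: use L\"utkebohmert's structure theorem (Theorem \ref{Lut}) to reduce to Kummer local systems on a small punctured disk, and then carry out an explicit cohomological computation. The key observation is that once $\mc M$ acquires Kummer structure on $\D^*(\e)$, its compactly supported cohomology decomposes into isotypic components, each of which can be shown to vanish.

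First, I would fix a finite \'etale Galois cover $\pi\colon Y\to \B^*$ trivializing $\mc M$; such a cover exists because $\mc M$ is locally constant constructible with finite stalks. Let $d$ be the order of its Galois group. Applying Theorem \ref{Lut} to $\pi$ yields $\beta\in|k^\times|$ such that the restriction of $\pi$ to any annulus $\B(r,R)\subset \B^*$ with $\beta^{-1}r\le \beta R$ becomes of Kummer type on the wider annulus $\B(\beta^{-1}r,\beta R)$. I would then choose $\e_0\in|k^\times|$ depending on $\beta$ so that, for every $\e\le \e_0$ and every sufficiently small $r$, the conclusion of Theorem \ref{Lut} applies to $\B(r,\e)$. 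Exhausting $\D^*(\e)$ by such annuli $\B(r_n,\e)$ with $r_n\searrow 0$ and checking compatibility across overlaps, one deduces that $\mc M|_{\D^*(\e)}$ is pulled back from the $d$-th power Kummer cover $\pi_d\colon \D^*(\e^{1/d})\to \D^*(\e)$, $z\mapsto z^d$, i.e.\ that $\pi_d^*\mc M$ is constant (although endowed with a non-trivial $\mu_d$-action on its fibers).

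Next, $\mu_d$-Galois descent supplies a decomposition $\mc M|_{\D^*(\e)}\cong \bigoplus_{\chi} L_\chi^{\oplus n_\chi}$, where $\chi$ runs over characters $\mu_d\to (\Z/n\Z)^\times$ and $L_\chi$ is the corresponding Kummer local system on $\D^*(\e)$. It thus suffices to show $R\Gamma_c(\D^*(\e),L_\chi)=0$ for each $\chi$. Since $\pi_d$ is finite \'etale and hence proper, compactly supported cohomology commutes with $\pi_d$-pushforward, yielding
\begin{equation*}
R\Gamma_c(\D^*(\e^{1/d}),\Z/n\Z)\cong \bigoplus_\chi R\Gamma_c(\D^*(\e),L_\chi),
\end{equation*}
realizing the right-hand side as the $\mu_d$-isotypic decomposition of the left under the deck transformation $z\mapsto \zeta z$. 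The problem therefore reduces to computing $R\Gamma_c(\D^*(\e^{1/d}),\Z/n\Z)$ with its $\mu_d$-action and verifying that each $\chi$-isotypic component vanishes.

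The hard part will be precisely this cohomological computation together with the verification that every isotypic component vanishes. I plan to carry it out using an excision triangle for the open immersion $\D^*(\e^{1/d})\hookrightarrow \B(\e^{1/d})$, which expresses $R\Gamma_c(\D^*(\e^{1/d}),\Z/n\Z)$ in terms of the cohomology of $\B(\e^{1/d})$ relative to its boundary pieces $\{0\}$ and $\{|t|=\e^{1/d}\}$. The $\mu_d$-action is trivial on the origin contribution and Kummer-twisted on the outer contribution, and the combinatorics of these actions together with a possible refinement of $\e_0$ should force every isotypic component to be killed. This is the delicate point of the argument and the step at which the characteristic zero hypothesis on $k$ and the invertibility of $n$ intervene decisively through the application of Theorem \ref{Lut}.
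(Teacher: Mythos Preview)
Your outline is correct and rests on the same key input (Theorem~\ref{Lut}) as the paper, but the paper's execution is shorter and avoids the two places where your sketch becomes laborious. First, instead of patching Kummer structures on overlapping annuli to obtain a global Kummer description of $\mc M$ on $\D^*(\e)$, the paper simply writes $H^i_c(\D^*(\e),\mc M)\cong\varinjlim_{r}H^i_c(\D(\e)\setminus\D(r),\mc M)$ and works on one half-open annulus at a time; on each such annulus Theorem~\ref{Lut} already makes $\mc M$ a direct summand of the pushforward of a constant sheaf under a Kummer cover of annuli, so one is immediately reduced to the constant case. Second, the explicit formula $R\Gamma_c(\D(r),\Z/n\Z)\cong\Z/n\Z$ from \cite[Example~0.2.5]{Hub96}, applied to the excision triangle for $\D(r)\subset\D(\e)$, finishes the computation in one line.

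In particular your isotypic analysis is an unnecessary detour: once you reach $\bigoplus_\chi R\Gamma_c(\D^*(\e),L_\chi)\cong R\Gamma_c(\D^*(\e^{1/d}),\Z/n\Z)$, the right-hand side already vanishes (it is the constant-sheaf case on a punctured open disk), so every isotypic piece is zero without ever inspecting the $\mu_d$-action; the ``combinatorics of these actions'' and the ``possible refinement of $\e_0$'' you anticipate never arise. One genuine quantifier issue to fix: as written, your $\e_0$ depends on the degree $d$ of a trivializing cover and hence on $\mc M$, whereas the lemma demands a single $\e_0$ valid for all $\mc M$. The paper takes $\e_0=\beta$ directly; this works because L\"utkebohmert's $\beta$ is in fact uniform in the degree of the cover, a point you should make explicit.
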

We follow the proof of \cite[Lemma 6.12]{Ito20}. 
\begin{proof}
Take $\e_0$ to be an element $\beta$ as in Theorem \ref{Lut}. 
Since $H^i_c(\D^*(\beta),\mc M)\cong\varinjlim_{r\in|k^\times|}H^i_c(\D(\beta)\setminus\D(r),\mc M)$, it suffices to prove that $R\Gamma_c(\D(\beta)\setminus\D(r),\mc M)=0$. 
By Theorem \ref{Lut}, we may assume that $\mc M$ is a constant sheaf. 
Then the explicit description $R\Gamma_c(\D(r),\Z/n\Z)\cong \Z/n\Z$ as in \cite[Example 0.2.5]{Hub96} shows the assertion. 
\end{proof}

\begin{lem}\label{gen lc}
Let $\B^2_k=\Spa(k\la s,t\ra,k^\circ\la s,t\ra)$ and $\nu\ge0$ an integer. 
For an element $\e\in|k^\times|$, we consider the subsets $\msf U_\e=\{x\in \B^2_k\mid 0<|s(x)|\le\e\cdot|t^\nu(x)|\}$ and $\msf U_\e'=\{x\in\B^2_k\mid0<|s(x)|<\e\cdot|t^\nu(x)|\}$, which we regard as pseudo-adic spaces.  
We denote the second projection $\msf U'_\e\to\B=\Spa(k\la t\ra,k^\circ\la t\ra)$ by $\pr'_{\e,2}$. 
Let $\mc M$ be a locally constant constructible sheaf of $\Z/n\Z$-modules on $\msf U_1$. 
Then there exists an element $\e_0\in|k^\times|$ satisfying the following condition: 
For every element $\e\in |k^\times|$ with $\e\le\e_0$, there exists an element $\delta\in|k^\times|$ such that the restriction of $R\pr'_{\e,2!}(\mc M|_{\msf U'_\e})$ to $\B^*(\delta)$ vanishes.
\end{lem}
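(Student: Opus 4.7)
The plan is to verify the vanishing stalkwise at geometric points of $\B^*(\delta)$, using Huber's proper base change for compactly supported direct image to identify each stalk with the $R\Gamma_c$ of a fiber, and then Lemma \ref{Kum} to get vanishing on each fiber.

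First, one may assume $k$ is algebraically closed by passing to the completion of an algebraic closure (étale vanishing is insensitive to this extension, and the map $\pr'_{\e,2}$ and sheaf $\mc M$ base change in the obvious way). Let $\e_0 \in |k^\times|$ be the constant provided by Lemma \ref{Kum}, and let $\e \in |k^\times|$ satisfy $\e \le \e_0$. I claim that any $\delta \in |k^\times|$ with $\delta \le 1$ works. The key point is that the change of coordinates $u = s/t^\nu$ gives a canonical isomorphism of pseudo-adic spaces
\[
\msf U'_\e \cap \pr_2^{-1}(\B^*(\delta)) \;\cong\; \D^*(\e) \times \B^*(\delta),
\]
under which $\pr'_{\e,2}$ corresponds to the second projection. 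Indeed, the defining inequality $0 < |s| < \e |t|^\nu$ becomes $0 < |u| < \e$, and the ambient constraint $|s|\le 1$ reads $|u t^\nu|\le 1$, which is automatic from $|u| < \e \le 1$ and $|t|\le 1$. Under this trivialization, every fiber over a geometric point of $\B^*(\delta)$ is the punctured open disk $\D^*(\e)$ over the (algebraically closed) residue field, and the restriction of $\mc M$ remains locally constant constructible.

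By Huber's proper base change \cite[Theorem 5.3.9]{Hub96}, the stalk of $R\pr'_{\e,2!}(\mc M|_{\msf U'_\e})$ at a geometric point $\bar t$ of $\B^*(\delta)$ is identified with the $R\Gamma_c$ of this fiber. Since $\e\le\e_0$, Lemma \ref{Kum} applied over the residue field of $\bar t$ yields the vanishing of this $R\Gamma_c$, hence of every stalk.

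The main obstacle is the compatibility with higher-rank geometric points of $\B^*(\delta)$: one must confirm that Huber's base change still identifies the stalk with a fiber that looks like a punctured open disk over an algebraically closed (possibly higher rank) valuation field, and that Lemma \ref{Kum}, although stated over $k$ algebraically closed, applies in this relative setting. The trivialization $u = s/t^\nu$ is what makes this uniform: it turns the originally shrinking family of disks into a product, so that the fiber geometry does not depend on the position (or rank) of $\bar t$ in the base.
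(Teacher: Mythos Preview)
Your approach has a genuine gap precisely at the point you yourself flag as the ``main obstacle.'' Lemma \ref{Kum} is stated for a fixed algebraically closed complete non-archimedean field $k$, and the constant $\e_0$ it produces---coming from L\"utkebohmert's $\beta$ in Theorem \ref{Lut}---depends on that field. When you pass to a geometric point $\bar t$ of $\B^*(\delta)$, the fiber is a punctured disk over the residue field $\kappa(\bar t)$, not over $k$. For higher-rank $\bar t$ this residue field is not even a rank-$1$ non-archimedean field, so Theorem \ref{Lut} and Lemma \ref{Kum} simply do not apply. Even at rank-$1$ geometric points, the residue field is an extension of $k$ and there is no reason the same $\e_0$ should work uniformly. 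The trivialization $u=s/t^\nu$ makes the \emph{geometry} of the fibers uniform, but it does nothing to make the \emph{constant} $\e_0$ uniform across varying residue fields; so your claim that ``any $\delta\le1$ works'' is unjustified.

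The paper closes this gap by inserting one extra step: after reducing to $\nu=0$ via the same coordinate change, it observes that $u_!\mc M$ is (strictly) quasi-constructible and invokes Huber's finiteness theorem \cite[Theorem 2.1]{Hub98} to find a $\delta$ such that $R\pr'_{\e,2!}\mc M$ is \emph{locally constant} on $\B^*(\delta)$. Once the pushforward is locally constant, vanishing can be tested at \emph{classical} points $x$ only, where $\kappa(x)=k$ (since $k$ is algebraically closed), and then Lemma \ref{Kum} applies directly with the original $\e_0$. This is why the $\delta$ in the statement genuinely depends on $\e$ and on $\mc M$, rather than being arbitrary.
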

\begin{proof}
We may assume that $k$ is algebraically closed. 
Considering the isomorphism $\varphi_\nu\colon \B\times\B^*\cong\B\times\B^*;(s,t)\mapsto(s/t^\nu,t)$ of adic spaces, 
we may assume that $\nu=0$. 
We denote the natural immersion $\msf U'_\e\to \B^2_k$ by $u$. 
Then the sheaf $u_!\mc M$ is strictly quasi-constructible (in particular, quasi-constructible). 
Thus, by \cite[Theorem 2.1]{Hub98} and \cite[1.2.iv)]{Hub98}, there exists an element $\delta\in |k^\times|$ such that the restriction of $R\pr'_{\e,2!}\mc M$ to $\B^*(\delta)$ is locally constant of finite type. 
Note that, for a classical point $x$ of $\B^*(\delta)$, we have $(R\pr'_{\e,2!}\mc M)_x\cong R\Gamma_c(\D(\e),\mc M|_{\D(\e)\times x})$ by \cite[Theorem 5.3.9]{Hub96}. 
Take $\e_0$ to be an element as in Lemma \ref{Kum}. 
Then, if $\e\le\e_0$, then the stalk $(R\pr'_{\e,2!}\mc M)_x\cong R\Gamma_c(\D(\e),\mc M|_{\D(\e)\times x})$, and hence $R\pr'_{\e,2!}\mc M$ vanishes. 
\end{proof}

\begin{proof}[Proof of Proposition \ref{tube}]
We consider the morphism $g\colon P\to S=\Spec[s,t]$ over $\Spec k[t]$ defined by $s\mapsto g\in \mc O_{P}(P)$. 
By Corollary \ref{lc}, we can apply Lemma \ref{gen lc} to the restriction of $R^ig^\rig_!\mc F^\rig$ to the subset $\{x\in S^\rig\mid 0<|s(x)|\le|t^\nu(x)|\}$, which proves the assertion.  
\end{proof}

\begin{rem}
It would be straightforward to extend Proposition \ref{tube} to the case where the closed subscheme $X$ is not necessarily defined by one global equation, i.e, the case of general closed subschemes $X$. 

It might be also possible to remove the assumption on the characteristic of the base field $k$ by an argument similar to the proof of \cite[Lemma 7.1]{Ito20}. 
\end{rem}



\begin{thebibliography}{99}
\bibitem[SGA4]{SGA4} M.\ Artin, A.\ Grothendieck, and J.\ L.\ Verdier, Th\'eorie des topos et cohomologie \'etale des sch\'emas, I, S\'eminaire de G\'eom\'etrie Alg\'ebrique de Bois-Marie 1963--1964 (SGA4), Lecture Notes in Math.\ {\bf269}, Springer, Berlin, 1972; II, {\bf 270}; III, {\bf 305}, 1973. 
\bibitem[Fuj95]{Fuj95} K.\ Fujiwara, Theory of tubular neighborhood in etale topology, Duke Math.\ {\bf80}, no.1 (1995) 15--57.
\bibitem[FK18]{FK18} K.\ Fujiwara, F.\ Kato, Foundations of rigid geometry I, EMS Monographs in Mathematics. European
Mathematical Society, Z\"urich, 2018.
\bibitem[Hub94]{Hub94} R.\ Huber, A generalization of formal schemes and rigid analytic varieties, Math.\ Z.\ {\bf217} (1994) 513--551.
\bibitem[Hub96]{Hub96} R.\ Huber, \'Etale cohomology of rigid analytic varieties and adic spaces, Aspects of Mathematics, vol. E30 (Vieweg, Braunschweig, 1996).
\bibitem[Hub98a]{Hub98} R.\ Huber, A finiteness result for the compactly supported cohomology of rigid analytic varieties, J.\ Alg.\ Geom.\ {\bf7} (1998) 313--357.
\bibitem[Hub98b]{Hub98b} R.\ Huber, A finiteness result for the direct image sheaves on the \'etale site of rigid analytic varieties, J.\ Alg.\ Geom.\ {\bf7} (1998) 359--403.
\bibitem[Hub07]{Hub07} R.\ Huber, A finiteness result for the compactly supported cohomology of rigid analytic varieties, II, Ann.\ Inst.\ Fourier, Grenoble {\bf 57}, 3 (2007) 973--1017.
\bibitem[Ill06]{Ill06} L.\ Illusie, Vanishing cycles over general bases after P. Deligne, O. Gabber, G. Laumon and F. Orgogozo, https://www.imo.universite-paris-saclay.fr/~illusie/vanishing1b.pdf.
\bibitem[Ill14]{Ill14} L.\ Illusie, Expos\'e XI. Produits orient\'es, Travaux de Gabber sur l'uniformisation locale
et la cohomologie \'etale des schmas quasi-excellents. Astrisque {\bf363--364 }(2014) 213--234.
\bibitem[Ill17]{Ill17} L.\ Illusie, Around the Thom-Sebastiani theorem, with an appendix by Weizhe Zheng, Manuscripta Math. {\bf152} (2017) 61--125.
\bibitem[Ito20]{Ito20} K.\ Ito, Uniform local constancy of \'etale cohomology of rigid analytic varieties, https://arxiv.org/abs/2008.07794. 
\bibitem[LZ19]{LZ19} Q.\ Lu, W.\ Zheng, Duality and nearby cycles over general bases, Duke Math.\ {\bf 168}, No.\ 16 (2019) 3135--3211.
\bibitem[Lut93]{Lut93} L\"utkebohmert, Riemann's existence problem for a $p$-adic field, Inv.\ Math.\ {\bf111} (1993) 309--330.
\bibitem[Mie06]{Mie06} Y.\ Mieda, On the action of the Weil group on the $\ell$-adic cohomology of rigid spaces over local field, Int.\ Math.\ Res.\ Not. {\bf2006}, Article ID 16429 (2006) 1--11.
\bibitem[Org06]{Org06} F.\ Orgogozo, Modifications et cycles proches sur une base g\'en\'erale, 
Int.\ Math.\ Res.\ Not.\ {\bf 2006}, Article ID 25315 (2006) 1--38. 
\bibitem[Ray74]{Ray74} M.\ Raynaud, G\'eom\'etrie analytique rigide d'apr\`es Tate, Kiehl..., M\'em.\ Soc.\ Math.\ Fr., {\bf39-40 }(1974) 319--327.
\bibitem[Sch12]{Sch12} P.\ Scholze, Perfectoid spaces, Publ.\ Math.\ Inst.\ Hautes \'Etudes Sci.\ {\bf116} (1) (2012) 245--313.
\end{thebibliography}
\end{document}